\DeclarePairedDelimiter\abs{\lvert}{\rvert}%
\DeclarePairedDelimiter\norm{\lVert}{\rVert}%
\DeclareMathOperator*{\argmax}{arg\,max}
\DeclareMathOperator*{\argmin}{arg\,min}
\newcommand{\XXi}[1]{X^{(#1)}}
\let\oldabs\abs
\def\abs{\@ifstar{\oldabs}{\oldabs*}}
\let\oldnorm\norm
\def\norm{\@ifstar{\oldnorm}{\oldnorm*}}
\newtheorem{theorem}{Theorem}[section]
\newtheorem{remark}{Remark}[section]
\title{Data-Driven Self-Supervised Learning for the Discovery of Solution Singularity for Partial Differential Equations}
\author{Difeng Cai\thanks{Department of Mathematics, Southern Methodist University, Dallas, TX 75205}
\and Paulina Sep\'ulveda\thanks{Pontificia Universidad Cat\'olica de Valpara\'iso, Av. Brasil 2950, Valpara\'iso, Chile}
}
\date{}
\begin{document}
\maketitle
\begin{abstract}
The appearance of singularities in the function of interest constitutes a fundamental challenge in scientific computing. It can significantly undermine the effectiveness of numerical schemes for function approximation, numerical integration, and the solution of partial differential equations
(PDEs), etc. The problem becomes more sophisticated if the location of the singularity is unknown, which is often encountered in solving PDEs.
Detecting the singularity is therefore critical for developing efficient adaptive methods to reduce computational costs in various applications.
In this paper, we consider singularity detection in a purely data-driven setting. Namely, the input only contains given data, such as the vertex set from a mesh.
To overcome the limitation of the raw unlabeled data, we propose a self-supervised learning (SSL) framework for estimating the location of the singularity.
A key component is a filtering procedure as the pretext task in SSL, where two filtering methods are presented, based on $k$ nearest neighbors and kernel density estimation, respectively.
We provide numerical examples to illustrate the potential pathological or inaccurate results due to the use of raw data without filtering. 
Various experiments are presented to demonstrate the ability of the proposed approach to deal with input perturbation, label corruption, and different kinds of singularities such interior circle, boundary layer, concentric semicircles, etc. 
\end{abstract}

\section{Introduction}
\label{sec:intro}

Partial differential equations (PDEs) play an indispensable role in a wide range of applications in computational science and engineering, such as mechanics, fluid dynamics, acoustics, electromagnetics, etc.
For challenging problems, the solution of the PDE can contain singularities in the form of sharp gradients, boundary layers, interior layers, etc.
The local, nonsmooth behavior of the solution can present great difficulties for numerical schemes to solve the PDE, causing numerical instability, slow convergence, loss of accuracy, etc.
The scenario can become even more sophisticated when the location of the singularity is \emph{unknown}, and it may require massive computational resources to resolve the singularity without knowing its location.
Detecting the singularity is thus critical for the reliability and efficiency of numerical simulations. 
Successful singularity detection can facilitate a number of computational tasks, including \emph{targeted} mesh generation using powerful software libraries such as Netgen \cite{schoberl1997netgen,ngsolve}, TetGen \cite{TetGen} CPAFT \cite{chengdi2025}; node generation algorithms in radial basis functions \cite{garmanjani2024adaptive,cavoretto2021adaptive,cavoretto2024RBF}, finite difference \cite{li2024stable}, RBF-FD \cite{bayona2017role,davydov2023stencil} to improve the solution accuracy near the singularity;
the selection or construction of suitable function classes for better approximation \cite{oanh2017adaptive,slak2019rbf};
adapting the loss function or collocation points in neural network-based approaches \cite{brink2021neural,mishra2024artificial}.

To efficiently discretize PDEs with unknown solution singularity, a widely used approach is to perform adaptive mesh refinement (AMR) \cite{purdue1972,AMR2Dtests,AMR2016comsol,AMReX2019}, which generates a mesh by successively refining towards the regions with large error estimates based on \emph{a posteriori} error estimation techniques \cite{ainsworthoden2000book,verf2013book,thesisDifengCai}. Consider elliptic PDEs. During AMR, the computation will generally become increasingly more demanding as the mesh size (or the number of degrees of freedom) keeps increasing during the refinement, since a larger system needs to be solved each time the mesh is refined. Besides, without the information of the singularity, the implementation of adaptive meshing guided by \emph{a posteriori} error estimators can be rather sophisticated.
In recent years, machine learning has emerged as a popular tool for scientific computing, and an active field of research is to use neural networks for solving PDEs. 
Some representative examples include deep Ritz \cite{deepRitz}, physics-informed neural network (PINN) \cite{raissi2017physics}, Fourier neural operator (FNO) \cite{FNO}, Galerkin neural network (GNN) \cite{ainsworth2021GNN}, etc.
Compared to the research along those lines, the use of machine learning for singularity detection, particularly in the absence of numerical solutions, remains an unexplored area.
The problem we are interested in is to predict the location of singularity in a data-driven fashion, namely, using given mesh data (i.e., a set of nodes) only.

\paragraph{Self-supervised learning (SSL)}
Predicting the location of the singularity using mesh data appears to be a supervised learning task in the context of machine learning. However, the mesh data (a set of nodes) is \emph{unlabeled}, and therefore supervised learning can not be applied in this setting.
A viable framework for such tasks is \emph{self-supervised learning}, which is able to learn generic representations from \emph{unlabeled} data \cite{SSLcookbook}.
For example, it can be used to predict hidden words \cite{baevski2020wav2vec,baevski2022data2vec} or masked patches of an image \cite{grill2020bootstrap,he2022masked}.
SSL is used in a variety of fundamental deep learning architectures, such as BERT \cite{BERT} and variational autoencoders \cite{VAE2014}.
Recent studies show that SSL models are more robust to label corruption or input perturbation \cite{SSL2019robust}, and more fair compared to supervised learning models \cite{SSL2022fair}.
SSL usually consists of two stages: pretext tasks and downstream tasks.
Pretext tasks aim to generate ``pseudo-labels'' from unlabeled data and to produce useful representations for downstream tasks, while a downstream task performs training with labeled data like a supervised learning task (cf. \cite{SSLcookbook}).
The design of pretext tasks is crucial and depends on the specific application.
A commonly used pretext task is \emph{masking}, which enables the downstream task to predict the masked information for training the SSL model.
This has been used in a variety of applications, such as masked language modeling \cite{BERT,transfer2020}, 
masked autoencoders \cite{masked2022vision},
masked patch prediction in vision transformer (ViT) \cite{ViT2021}.
In the context of singularity detection, the design of pretext tasks and downstream tasks is highly non-trivial, and existing SSL techniques in language modeling or computer vision are not applicable.
For example, masking can not be applied in this case since the singularity is unknown and no ground truth is available. 
Moreover, different from \emph{finite} dimensional information such as images (with finite number of pixels) and texts (with finite choices of words), it is unclear how to represent and characterize the unknown singularity, which can be a singular layer that contains \emph{infinitely} many points\cite{stynes2005confusion,verf1998reaction,ainsworth1999reaction,confusion}.
These difficulties call for novel approaches that can adapt the SSL framework to the context of singularity detection.

\paragraph{Methodology and contributions}
In this paper, we propose a novel data-driven self-supervised learning framework to perform singularity detection with unlabeled mesh data.
The framework is based on a general probabilistic formulation that allows flexible design of pretext tasks (labeling, representation learning, etc.) and straightforward downstream training.
To produce intelligible representations for the downstream task, we use filtering as a pretext task and propose two methods to filter the raw input data. The importance of filtering will be demonstrated via numerical experiments.
The proposed data-driven self-supervised learning approach offers an effective way to predict the singularity with the following benefits.
\begin{itemize}
\itemsep0in
    \item It works for \emph{unlabeled} data and the ``small data'' regime.
    \item It is insensitive to label (pseudo-label) corruption and input perturbation.
    \item It allows versatile choices for representing the singularity.
    \item It enables the use of machine learning techniques to facilitate pretext and downstream tasks.
\end{itemize}

The rest of the manuscript is organized as follows. Section~\ref{sec:setup} introduces the mathematical setup of the singularity detection problem considered in this paper. Section~\ref{sec:formulation} presents a probabilistic formulation of the data-driven singularity detection problem. Based on the probabilistic formulation, Section~\ref{sec:SSL} introduces the data-driven self-supervised singularity detection framework, including two filtering methods. Section~\ref{sec:experiments} illustrates the empirical performance of the proposed approach via extensive numerical experiments.
Throughout the presentation, $\norm{\cdot}$ denotes the Euclidean norm.

\section{Problem setup}
\label{sec:setup}
In this section, we present the mathematical setup of the singularity detection problem.

\paragraph{Singularity}
Let $\Omega\subseteq\mathbb{R}^d$ denote the domain of the PDE problem.
Assume the set of singularity is given by $\mathcal{S}=\{x\in\overline{\Omega}: F_*(x)=0\}$ for some unknown continuous function $F_*$, for example, a polynomial.
Note that the set $\mathcal{S}$ may contain one single component or multiple disconnected components.
To illustrate this, consider the following three choices of polynomial $F_*$ over $(x,y)\in\mathbb{R}^2$.
\begin{equation}
\label{eq:examples}
\begin{aligned}
    F_*(x,y)&=x^2+y^2-\frac{1}{4},\\ F_*(x,y)&=(x+1)(y+1),\\ F_*(x,y)&=(x^2+y^2-0.5^2)(x^2+y^2-0.75^2).
\end{aligned}    
\end{equation}
The plots of $\mathcal{S}$ corresponding to the three examples are shown in Figure~\ref{fig:poly_sing} (from left to right), where the choices of $\Omega$ are $(-1,1)^2$, $(-1,1)^2$, $(0,1)\times (-1,1)$, respectively.

\begin{figure}
\includegraphics[width=\textwidth]{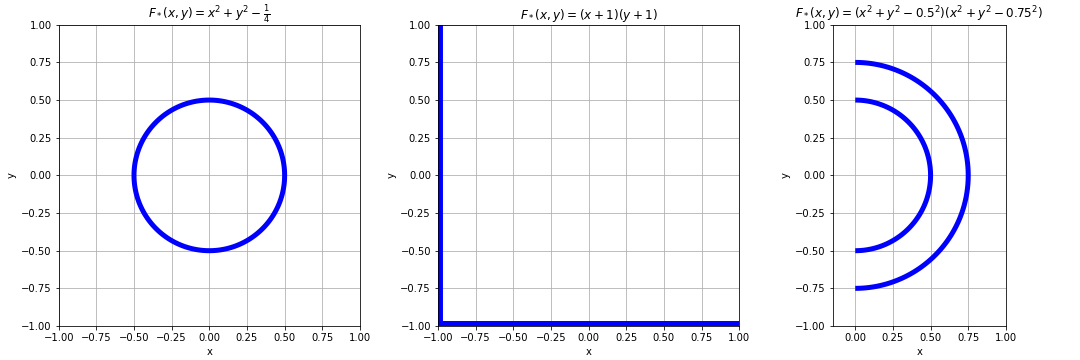}
\caption{The singularity set $\mathcal{S}$ corresponding to different polynomials $F_*$ described in~\eqref{eq:examples}.}
\label{fig:poly_sing}
\end{figure}

\paragraph{Data}
The data is assumed to be a set of nodes, for example, generated by adaptive mesh refinement (AMR)~\cite{AMR2Dtests,confusion}. 
Two types of data structures are considered:
\begin{itemize}
    \item ``Type I'' (data aggregate): $X$ consists of all nodes in the final mesh after $R$ steps of AMR.
    \item ``Type II'' (data batch): $\XXi{0},\dots,\XXi{R}$ from $R$ refinement steps, where $\XXi{0}$ consists of nodes in the initial mesh and $\XXi{i}$ ($i\geq 1$) denotes the newly generated nodes in the $i$-th step of AMR using, for example, the newest vertex bisection refinement strategy \cite{purdue1972,mitchell2016}.
\end{itemize}

The union $\bigcup\limits_{i=0}^R \XXi{i}$ for all ``Type II'' data batches is equal to the final vertex set $X$ in Type I. 
An illustration of data batches is shown in Figure~\ref{fig:batches}.

 \begin{figure}[ht]
  \begin{subfigure}[b]{0.32\textwidth}
  \includegraphics[width=\textwidth]{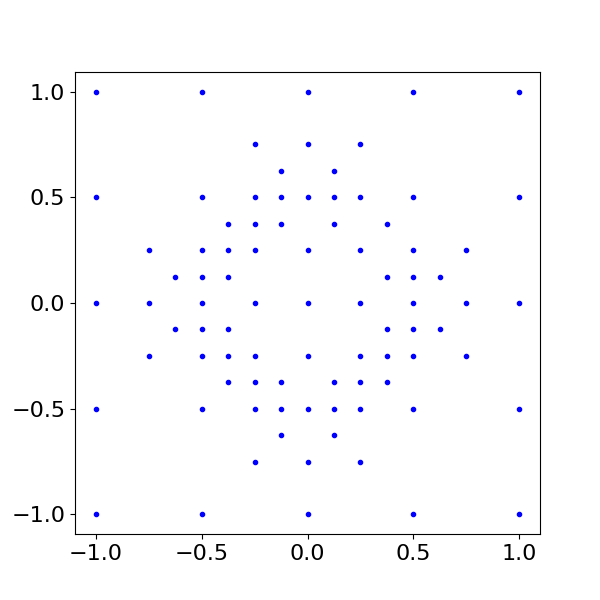}  
 \caption{$X^{(0)}$ to $X^{(10)}$}
 \end{subfigure}
 \begin{subfigure}[b]{0.32\textwidth}
 \includegraphics[width=\textwidth]{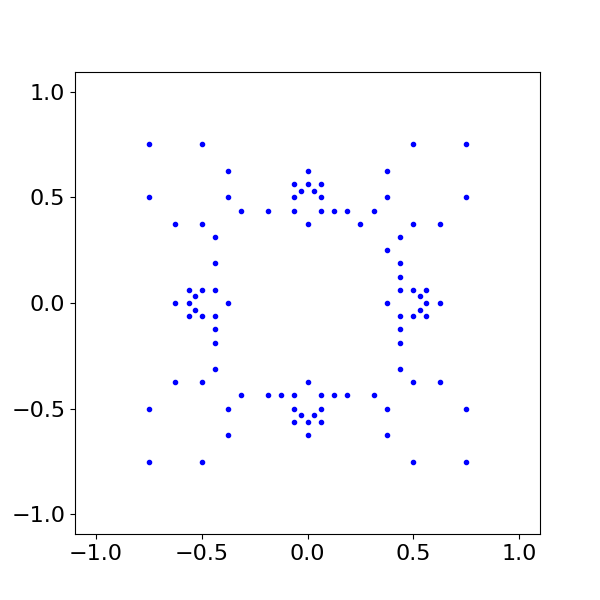}   
 \caption{$X^{(10)}$ to $X^{(14)}$}
 \end{subfigure}
 \begin{subfigure}[b]{0.32\textwidth}
 \includegraphics[width=\textwidth]{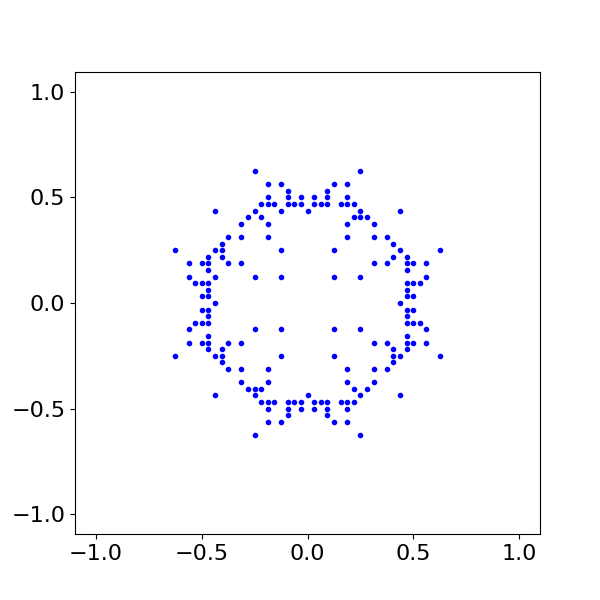}   
 \caption{$X^{(14)}$ to $X^{(18)}$}
 \end{subfigure}
 \\
  \begin{subfigure}[b]{0.32\textwidth}
 \includegraphics[width=\textwidth]{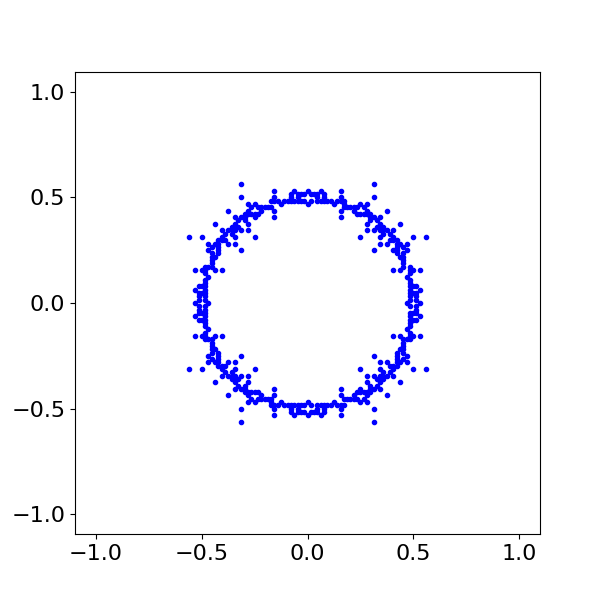}   
 \caption{$X^{(18)}$ to $X^{(24)}$}
 \end{subfigure}
 ~
\begin{subfigure}[b]{0.32\textwidth}
 \includegraphics[width=\textwidth]{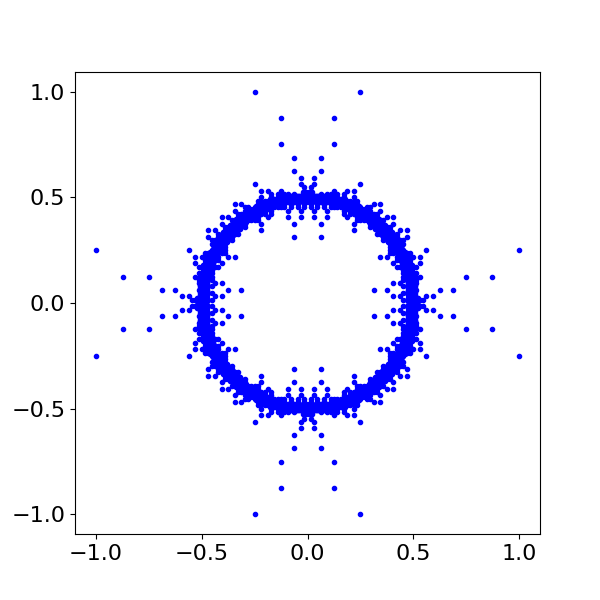}   
 \caption{$X^{(24)}$ to $X^{(36)}$}
 \end{subfigure}
~
 \begin{subfigure}[b]{0.32\textwidth}
 \includegraphics[width=\textwidth]{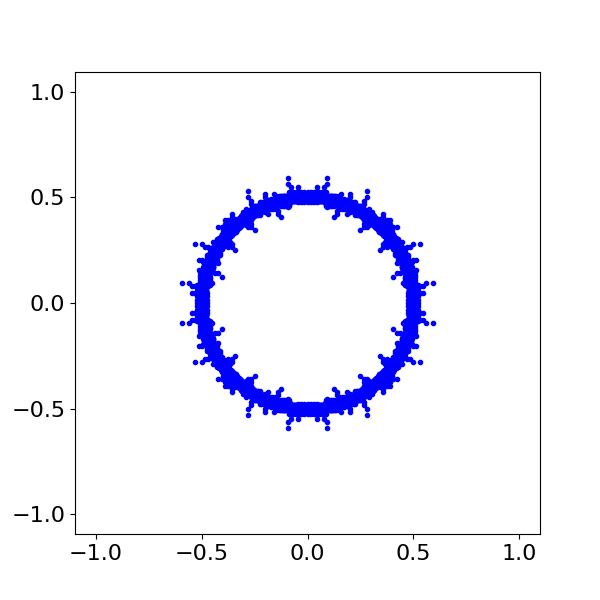}   
 \caption{$X^{(36)}$ to $X^{(44)}$}
 \end{subfigure}
 \caption{Data batches generated from AMR in \cite{confusion} for a singularly-perturbed reaction-diffusion problem \eqref{eq:reaction-difussion}. $X^{(0)}$ corresponds to the nodes of the initial mesh. The subset $X^{(i)}$ to $X^{(j)}$ corresponds to the new nodes generated from the $i$-th refinement until the $j$-th refinement.}
 \label{fig:batches}
\end{figure}

\paragraph{Goal}
The goal of singularity detection is to find a detection function $f(x)$ such that the set of roots of $f(x)$ in the closure of the domain $\Omega$ is close to the true singularity set $\mathcal{S}$.
The only data given is a set of nodes from a mesh generated by AMR,
where the mesh is \emph{not} assumed to be sufficiently fine, since the motivation is to efficiently perform singularity detection in the ``small data'' regime, without requiring a fine mesh (which can be computationally demanding to generate).
We represent the singularity detection $f$ in the following form
\begin{equation}
\label{eq:f}
    f(x)=\sum_{i=1}^k c_i \phi_i(x),
\end{equation}
where the functions $\phi_i$ are prescribed basis functions.
For example, we can choose $\phi_i$ to be polynomials.
Besides, polar coordinates and Fourier basis functions can also be used, which will be explored in numerical experiments.
To find a suitable coefficient vector $\mathbf{c}=[c_1,\dots,c_k]$, a probabilistic formulation is presented in Section \ref{sec:formulation}.

\begin{remark}
In terms of detecting the singularity set $\mathcal{S}$, it is worth noting that trying to approximate the function $F_*$ is \emph{not} well-posed as a solution approach, since there are infinitely many functions with the same set of roots as $F_*$. 
For example, any constant multiple of $F_*$, i.e. $\alpha F_*$ ($\alpha\neq 0$), yields exactly the same set of roots as $F_*$.
Also, $F_*(x,y)^k$ and $F_*(x,y)$ share the same set of roots for a positive integer $k$.
A well-posed formulation for approximating $F_*$ is presented in Section \ref{sec:formulation}.
\end{remark}

\section{Baseline singularity detection via a probabilistic formulation}
\label{sec:formulation}
In this section, we formulate the data-driven singularity detection problem using a probabilistic perspective. 
The formulation serves as the building block of the downstream task in the SSL framework for singularity detection presented later in Section \ref{sec:SSL}.
Given mesh data (Type I or Type II), we formulate the regression problem of determining the coefficient vector $\mathbf{c}$ in \eqref{eq:f} via maximum likelihood estimation (MLE) as follows.
Given $x\in\overline{\Omega}$ and $\mathbf{c}$, we model the unknown $F_*(x)$ as a random variable given by $y=f(x)+\epsilon$, where $\epsilon\sim\mathcal{N}(0,\sigma^2)$ denotes the Gaussian random noise with variance $\sigma^2$ and $f$ is the approximation ansatz in \eqref{eq:f}.
Therefore, the random variable $y$ is Gaussian, given $x$ and $\mathbf{c}$. 
\begin{equation}
\label{eq:Gaussian}
   y|x,\mathbf{c} \sim \mathcal{N}(f(x), \sigma^2), \quad p(y|x;\mathbf{c}) = \frac{1}{\sqrt{2\pi\sigma^2}} e^{-\frac{(f(x)-y)^2}{2\sigma^2}}.
\end{equation}
The setup in  \eqref{eq:Gaussian} enables MLE for the coefficient parameter $\mathbf{c}$ in $f$ once \emph{labeled} training data is given. Since the input mesh data is unlabeled, we discuss pseudo-labeling below.

\paragraph{Choice of labels}
For $x\in\mathcal{S}$, the true value of $F_*(x)$ is known to be $0$, and thus an ideal label for $x$ is $0$.
Since $x\notin\mathcal{S}$ in general for a mesh node $x$, the true label of $F_*(x)$ is unknown.
To this end, we assign a pseudo-label of $0$ to each point $x$ in the training data.
The raw training data with pseudo-labels is then
\begin{equation}
\label{eq:labeled}
\text{Type I}:\;\, (x,0) \;\, \text{with}\;\, x\in X; \quad  \text{Type II}:\;\, (x,0)\;\,\text{with}\;\,  x\in\bigcup\limits_{i=0}^R\XXi{i}.
\end{equation}
The choice of label is reasonable if $x$ is close to $\mathcal{S}$ (then $F_*(x)\approx 0$), but becomes improper if $x$ is away from $\mathcal{S}$.
Potential issues will be illustrated in Section \ref{sec:experiments} when using the raw data in \eqref{eq:labeled} directly for supervised learning due to input perturbation and label corruption discussed in Section \ref{sub:issues}.
This issue will be addressed by the self-supervised learning framework presented in Section \ref{sec:SSL}.

\paragraph{Constraint for coefficients $\mathbf{c}$}
Another issue to be fixed is that regression with labeled training data in \eqref{eq:labeled} can yield a trivial solution, namely, $f(x)=0$ (by simply setting $\mathbf{c}=\mathbf{0}$ so that $f$ achieves the true label $0$ for all nodes in the training data). 
This trivial choice gives zero training error (as true labels are all $0$ in \eqref{eq:labeled}), but is meaningless since \emph{every} point is a root of $f$ and will thus be predicted as a singularity.
To resolve this issue, we impose a constraint on the coefficient vector $\mathbf{c}$, for example, $\norm{\mathbf{c}}=1$.
We remark that the labeling in \eqref{eq:labeled} is the simplest choice, and other potential options to produce labeled data will be investigated at a later date.

\paragraph{Maximum likelihood estimation (MLE)}
Based on the probabilistic model in \eqref{eq:Gaussian}, we employ an MLE formulation to determine the coefficient vector $\mathbf{c}$ using the given data. For Type I and Type II data formats, the MLE formulations are slightly different, as described below. 

For Type I data $X$, we prescribe a single variance $\sigma^2$ for the Gaussian noise in \eqref{eq:Gaussian} for all $x\in X$.
The MLE for $\mathbf{c}$ in this case is then
\begin{equation}
\label{eq:MLE1}
    \mathbf{c}_* = \argmax_{\norm{\mathbf{c}}=1} p(\mathbf{0}|X;\mathbf{c}) 
    = \argmax_{\norm{\mathbf{c}}=1} \prod_{x\in X}p(0|x;\mathbf{c}).
\end{equation}

For Type II data batches $\XXi{0}$, $\dots$, $\XXi{R}$,
we use $\sigma_i^2$ as the variance in the Gaussian noise for data from $\XXi{i}$ ($0\leq i\leq R$), which yields
$$p_i(y|x;\mathbf{c}) = \frac{1}{\sqrt{2\pi\sigma_i^2}} e^{-\frac{(f(x)-y)^2}{2\sigma_i^2}},\quad x\in\XXi{i}.$$
The MLE for $\mathbf{c}$ in this case is
\begin{equation}
\label{eq:MLE2}
    \mathbf{c}_* = \argmax_{\norm{\mathbf{c}}=1} 
    \prod_{i=0}^R p_i(\mathbf{0}|\XXi{i};\mathbf{c})
    = \argmax_{\norm{\mathbf{c}}=1} 
    \prod_{i=0}^R\prod_{x\in \XXi{i}}p_i(0|x;\mathbf{c}).
\end{equation}
As stated in the theorem below, the MLE problems in \eqref{eq:MLE1} and \eqref{eq:MLE2} can be shown to be equivalent to certain quadratic programming problems.

\begin{theorem}
\label{thm:MLE}
    The MLE formulations in \eqref{eq:MLE1} and \eqref{eq:MLE2} can be written as 
    \begin{equation}
    \label{eq:minL1}
        \mathbf{c}_* = \argmin_{\norm{\mathbf{c}}=1} L_1(\mathbf{c}) = \argmin_{\norm{\mathbf{c}}=1} \sum_{x\in X} \frac{f(x)^2}{2\sigma^2}
    \end{equation}
    and 
    \begin{equation}
    \label{eq:minL2}
     \mathbf{c}_* = \argmin_{\norm{\mathbf{c}}=1} L_2(\mathbf{c}) = \argmin_{\norm{\mathbf{c}}=1} \sum_{i=0}^R  \sum_{x\in \XXi{i}} \frac{f(x)^2}{2\sigma_i^2},
    \end{equation}
    respectively.
    Here, the loss functions $L_1,L_2$ for Type I data and Type II data are defined as
\begin{equation}
\label{eq:L1L2}
L_1(\mathbf{c})=\sum\limits_{x\in X} \frac{f(x)^2}{2\sigma^2},\quad L_2(\mathbf{c})= \sum_{i=0}^R \sum\limits_{x\in \XXi{i}} \frac{f(x)^2}{2\sigma_i^2}, \quad \text{with}\;\; f=\sum_{i=1}^k c_i \phi_i(x) \;\, \text{in} \;\,  \eqref{eq:f}.
\end{equation}
\end{theorem}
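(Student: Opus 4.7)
The plan is to apply the standard reduction of Gaussian maximum likelihood estimation to least squares via the monotonicity of the logarithm. Since $\log$ is strictly increasing and the likelihoods in \eqref{eq:MLE1} and \eqref{eq:MLE2} are strictly positive, $\argmax$ commutes with $\log$; negating then converts argmax into argmin. The unit-sphere constraint $\norm{\mathbf{c}}=1$ is preserved throughout, since all manipulations act on the objective only.

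For \eqref{eq:MLE1}, I would substitute $y=0$ in the density \eqref{eq:Gaussian} and take the logarithm of the product, obtaining
\[
\log\prod_{x\in X} p(0|x;\mathbf{c}) \;=\; -\frac{|X|}{2}\log(2\pi\sigma^2) \;-\; \sum_{x\in X}\frac{f(x)^2}{2\sigma^2}.
\]
The first term does not depend on $\mathbf{c}$ and can be dropped from the argmax; negating the remaining term converts $\argmax$ to $\argmin$ and yields exactly \eqref{eq:minL1} with $L_1$ as in \eqref{eq:L1L2}.

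For \eqref{eq:MLE2}, the same computation is applied batch-by-batch with batch-dependent variance $\sigma_i^2$:
\[
\log\prod_{i=0}^R\prod_{x\in \XXi{i}} p_i(0|x;\mathbf{c}) \;=\; -\sum_{i=0}^R \frac{|\XXi{i}|}{2}\log(2\pi\sigma_i^2) \;-\; \sum_{i=0}^R\sum_{x\in \XXi{i}}\frac{f(x)^2}{2\sigma_i^2}.
\]
Again the first sum is $\mathbf{c}$-independent and drops out, producing \eqref{eq:minL2}.

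There is no real obstacle here: the argument is the well-known heteroscedastic Gaussian MLE--weighted least squares equivalence. The only point to verify carefully is that every additive constant we discard is genuinely independent of $\mathbf{c}$, which is immediate because the variances $\sigma^2,\sigma_i^2$ and cardinalities $|X|,|\XXi{i}|$ are fixed once the data is given. As a byproduct, since $f(x)=\sum_{i=1}^k c_i\phi_i(x)$ is linear in $\mathbf{c}$, both $L_1$ and $L_2$ are quadratic forms in $\mathbf{c}$, so \eqref{eq:minL1} and \eqref{eq:minL2} are quadratically constrained quadratic programs on the unit sphere, as remarked after the theorem statement.
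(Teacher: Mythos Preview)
Your proposal is correct and follows exactly the same route as the paper: take logarithms, drop the $\mathbf{c}$-independent normalization constants, and flip the sign to pass from $\argmax$ to $\argmin$. If anything, you are slightly more explicit than the paper in isolating the discarded constants $-\tfrac{|X|}{2}\log(2\pi\sigma^2)$ and $-\sum_i\tfrac{|\XXi{i}|}{2}\log(2\pi\sigma_i^2)$, which the paper's proof absorbs silently when passing from $\sum\ln p$ to $\sum -f(x)^2/(2\sigma^2)$.
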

\begin{proof}
    We first prove that \eqref{eq:MLE1} (for Type I data $X$) is equivalent to \eqref{eq:minL2}.
\begin{equation*}
\begin{aligned}
    \mathbf{c}_* = \argmax_{\norm{\mathbf{c}}=1} p(\mathbf{0}|X;\mathbf{c}) 
    &= \argmax_{\norm{\mathbf{c}}=1} \prod_{x\in X}p(0|x;\mathbf{c})
    = \argmax_{\norm{\mathbf{c}}=1} \left( \ln \prod_{x\in X}p(0|x;\mathbf{c})\right)\\
    &= \argmax_{\norm{\mathbf{c}}=1} \sum_{x\in X} \ln p(0|x;\mathbf{c})
    = \argmax_{\norm{\mathbf{c}}=1}  \sum_{x\in X} -\frac{f(x)^2}{2\sigma^2}.
\end{aligned}
\end{equation*}
Similarly, we show that \eqref{eq:MLE2} (for Type II data batches $\XXi{i}$) and \eqref{eq:minL2} are equivalent.
\begin{equation*}
\begin{aligned}
    \mathbf{c}_*
    &= \argmax_{\norm{\mathbf{c}}=1} 
    \prod_{i=0}^R\prod_{x\in \XXi{i}}p_i(0|x;\mathbf{c}) = \argmax_{\norm{\mathbf{c}}=1} \sum_{i=0}^R\sum_{x\in \XXi{i}} \ln p_i(0|x;\mathbf{c})\\
    &= \argmax_{\norm{\mathbf{c}}=1} \sum_{i=0}^R  \sum_{x\in \XXi{i}} -\frac{f(x)^2}{2\sigma_i^2}=\argmin_{\norm{\mathbf{c}}=1} \sum_{i=0}^R  \sum_{x\in \XXi{i}} \frac{f(x)^2}{2\sigma_i^2}.
\end{aligned}
\end{equation*}
The proof is complete.
\end{proof}

For Type II data, the use of potentially different variances for different batches is encouraged. Specifically, it is reasonable to impose a smaller $\sigma_i^2$ for larger $i$ due to the fact that $\XXi{i}$ will be produced later in the refinement and will generally be closer to $\mathcal{S}$. As a result, the uncertainty of $F_*(x)$ for $x\in\XXi{i}$ should be smaller compared to coarse meshes like $\XXi{1}$, $\XXi{2}$.
The choice of $\sigma_i$ can also be seen from the loss function in \eqref{eq:L1L2}.
For larger $i$, smaller $\sigma_i^2$ gives a larger weight (i.e. $\frac{1}{2\sigma_i^2}$) in the loss $L_2$, which will put more emphasis on reducing $f(x)^2$ over $x\in\XXi{i}$ in the minimization \eqref{eq:minL2}. This makes sense if $\XXi{i}$ is close to $\mathcal{S}$ and $F_*(x)\approx 0$ for $x\in\XXi{i}$.
On the other hand, for small $i$, for example, $i=1$, $\XXi{1}$ is from a coarse mesh, and can contain points that are far from $\mathcal{S}$.
Hence $F_*(x)$ will not be close to $0$ for $x\in\XXi{i}$, and minimizing $f(x)^2$ should be discouraged. Note that \eqref{eq:minL2} is analogous to the weighted least squares regression.

\paragraph{Optimization}
In practice, the constraint in the optimization problems \eqref{eq:MLE1} and \eqref{eq:MLE2} is implemented as the quadratic constraint $\mathbf{c}\cdot\mathbf{c}=1$.
It can be seen that for the representation of $f$ in \eqref{eq:f}, the loss functions $L_1$ and $L_2$ in \eqref{eq:MLE1} and \eqref{eq:MLE2} are quadratic in $\mathbf{c}$. Both \eqref{eq:MLE1} and \eqref{eq:MLE2} can be written as a least squares problem with a quadratic constraint, and can be solved by standard optimization techniques \cite{convex2004book}. In numerical experiments in Section \ref{sec:experiments}, the constrained minimization is solved using the sequential least squares quadratic programming (SLSQP)~\cite{kraft1988software}. 

\paragraph{Algorithm}
The baseline singularity detection algorithm (without any pretext task to process the raw data) is presented in Algorithm \ref{alg:alg1}. 
It is data-driven as it only requires the mesh data as input. 
We remark that the representation $f$ to characterize the singularity is quite flexible and does not have to be limited to \eqref{eq:f}. 
One can customize the parameterization of $f$ or the choice of basis functions according to the underlying problem, the domain-specific knowledge, computational efficiency, and so on.
Algorithm \ref{alg:alg1} can be sensitive to input perturbation, pseudo-labeling, and may give undesired results (cf. Section \ref{sec:experiments}), and thus is \emph{not} recommended to be used alone. Instead, it serves as a building block in the SSL framework and will be used as a downstream task in SSL discussed later in Section \ref{sub:SSLfull}.

\begin{algorithm}[hbt!]
\caption{Primitive data-driven singularity detection (\emph{without} filtering)}
\label{alg:alg1}
\begin{algorithmic}
\STATE \textbf{Input:} Type I data aggregate $X$ (nodes from a mesh) or Type II data batches $\XXi{0},\dots,\XXi{R}$ from $R$ steps of AMR
\STATE \textbf{Hyperparameters:} $\sigma_i$ ($0\leq i\leq R$) if input is Type II, basis functions $\phi_1,\phi_2,\dots$ in \eqref{eq:f}
\STATE \textbf{Output:} A function $f$ from \eqref{eq:f} that minimizes the loss $L_1$ 
\IF{Input is Type I}
\STATE Define the loss function $L_1$ in \eqref{eq:L1L2} with $f$ in \eqref{eq:f}
\STATE Solve the constrained minimization in \eqref{eq:minL1} to obtain $\mathbf{c}_*$
\ENDIF
\IF{Input is Type II}
\STATE Define the loss function $L_2$ in \eqref{eq:L1L2} with $f$ in \eqref{eq:f}
\STATE Solve the constrained minimization in \eqref{eq:minL2} to obtain $\mathbf{c}_*$
\ENDIF
\RETURN $f$ in the form \eqref{eq:f} with coefficients $\mathbf{c}_*$
\end{algorithmic}
\end{algorithm}

\section{Data-driven self-supervised singularity detection}
\label{sec:SSL}
In this section, we present the complete data-driven self-supervised singularity detection framework, which consists of a pretext task and a downstream task.
The probabilistic formulation in Section \ref{sec:formulation} is used as the downstream task, and the pretext task will be introduced in Section \ref{sub:filtering}.
Section \ref{sub:issues} discusses potential issues associated with regression using raw data directly (Algorithm \ref{alg:alg1}), which will serve as the motivation for the development of pretext tasks.
The complete self-supervised singularity detection algorithm is presented in Section \ref{sub:SSLfull}.

\subsection{Input perturbation and label corruption}
\label{sub:issues}
Since the singularity set $\mathcal{S}$ is \emph{unknown} and points in the training data (mesh vertices) generally do not belong to $\mathcal{S}$, the problem we consider is substantially different the scenario of learning an equation for describing $\mathcal{S}$ using given points in $\mathcal{S}$.
The training data $X$ can be viewed as a ``perturbation'' of the true singularities in $\mathcal{S}$.
As will be illustrated in Section \ref{exp:circle-type1-type2}, \emph{input perturbation} can lead to entirely wrong results if Algorithm \ref{alg:alg1} is used directly without any pretext task to process the raw data. 
To address this issue, Section \ref{sub:filtering} introduces a filtering procedure to be used as the pretext task in the SSL framework.

Besides input perturbation, label corruption is another factor to take into account in developing a data-driven SSL framework.
Since the data is unlabeled, pseudo-labeling is used in Section \ref{sec:formulation} to assign $y=0$ to each point in the given dataset. As already mentioned in Section \ref{sec:formulation}, a point $x$ in the given dataset generally does not belong to $\mathcal{S}$ and thus $F_*(x)\neq 0$ in general, rendering a mismatch between true label and pseudo-label. This can be viewed as \emph{label corruption}. The corruption becomes more severe for input points that are further away from $\mathcal{S}$. Recall the formulation in \eqref{eq:minL2} for Type II data. We can see that the issue can be mitigated when using Type II data with properly chosen $\sigma_i$'s to account for the proximity of points in $\XXi{i}$ to $\mathcal{S}$. 
To see this, note that the pseudo-label $0$ for a node $x$ serves as an accurate approximation to the true label $F_*(x)$ if $x$ is close to $\mathcal{S}$ (since $F_*|_{\mathcal{S}}=0$ and $F_*$ is continuous). We know that $\XXi{i}$ with larger $i$ is generally closer to $\mathcal{S}$ compared to $\XXi{i}$ with smaller $i$. Therefore, it is reasonable to emphasize node batches with larger $i$ in the loss function \eqref{eq:L1L2} by using a larger weight $\frac{1}{2\sigma_i^2}$.
The weight $\frac{1}{2\sigma_i^2}$ for nodes in $\XXi{i}$ indicates the level of confidence about the points in $\XXi{i}$ to be close to the singularity set $\mathcal{S}$.
As can be seen from Figure \ref{fig:batches} in Section \ref{sec:setup}, a patch $\XXi{i}$ generated from finer refinement (corresponding to a larger $i$) is more likely to be close to $\mathcal{S}$, and thus $\frac{1}{2\sigma_i^2}$ should be larger as $i$ increases.
We therefore choose $\sigma_i$ to decrease with $i$, and several choices are tested in the numerical experiments Section~\ref{exp:circle-type1-type2} where the exponential decay is chosen.
Label corruption is correlated with input perturbation, and can be alleviated by reducing the ``noise'' in the data, namely, points far from $\mathcal{S}$. Such issues will be addressed by the filtering procedure presented in Sections \ref{sub:filtering}.

\subsection{Filtering as a pretext task in SSL}
\label{sub:filtering}
In this section, we develop pretext tasks to help reduce ``noise'' in the raw dataset, i.e., points that are far from the singularity set $\mathcal{S}$.
Since the set $\mathcal{S}$ is \emph{unknown}, it is impossible to measure the distance from a given point to any point in $\mathcal{S}$.
To handle this challenge, we use machine learning techniques below to indicate the proximity to $\mathcal{S}$ for each point in the given dataset (without knowing $\mathcal{S}$). Based on the approximate proximity to $\mathcal{S}$, a filtering procedure can be designed to generate a subset $\widetilde{X}$ of $X$ such that points in $\widetilde{X}$ are all ``close to'' $\mathcal{S}$. Here, the proximity to $\mathcal{S}$ is transformed into estimating the density of points in $X$, as dense regions are more likely to contain the singularity set $\mathcal{S}$ as a result of AMR with \emph{robust} a posteriori estimators.
We'd like to point out that AMR may generate poorly refined meshes (over-refined or under-refined in certain subdomains) if the underlying error estimator is not robust. This can lead to a high node density in a subdomain where the singularity is not present at all (cf. \cite{caizhang2009,verf2013book,thesisDifengCai}).
Remark \ref{rm:AMR} discusses the quality of mesh data and the importance of \emph{robust} a posteriori error estimation.

\begin{figure}
\begin{subfigure}[b]{0.22\textwidth}
   \includegraphics[width=\textwidth]{figsubmit/mesh0to10.png}  
 \caption{$X^{(0)}$ to $X^{(10)}$}
 \end{subfigure}
\begin{subfigure}[b]{0.22\textwidth}
  \includegraphics[width=\textwidth]{figsubmit/mesh10to14.png}  
 \caption{$X^{(10)}$ to $X^{(14)}$}
 \end{subfigure}
\begin{subfigure}[b]{0.22\textwidth}
  \includegraphics[width=\textwidth]{figsubmit/mesh24to36.png}  
 \caption{$X^{(24)}$ to $X^{(36)}$}
 \end{subfigure}
 \begin{subfigure}[b]{0.22\textwidth}
  \includegraphics[width=\textwidth]{figsubmit/mesh36to44.png}  
 \caption{$X^{(36)}$ to $X^{(44)}$}
 \end{subfigure}
\\
\begin{subfigure}[b]{0.22\textwidth}
  \includegraphics[width=\textwidth]{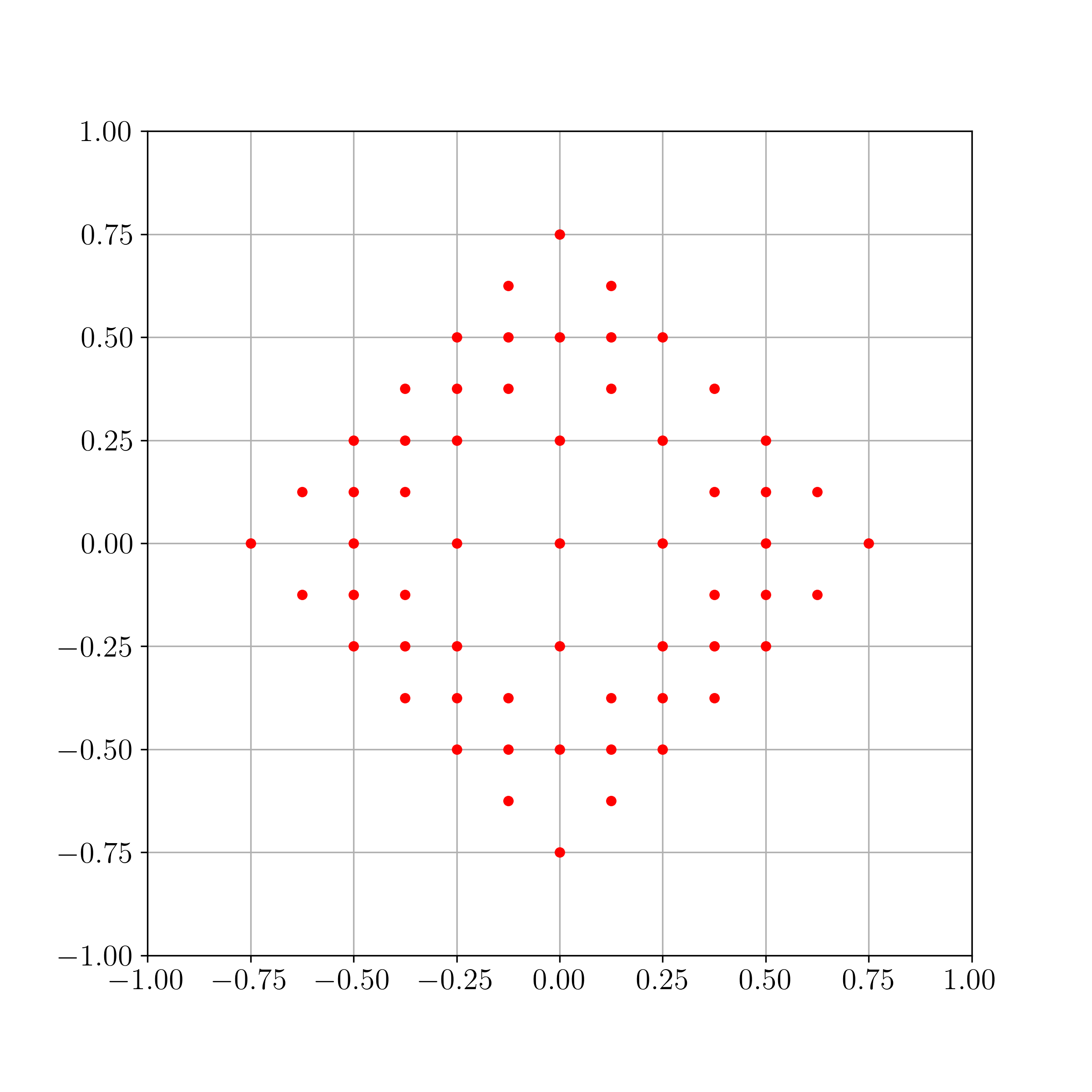}
 \caption{Filtering result for (a)}
 \end{subfigure}
\begin{subfigure}[b]{0.22\textwidth}
  \includegraphics[width=\textwidth]{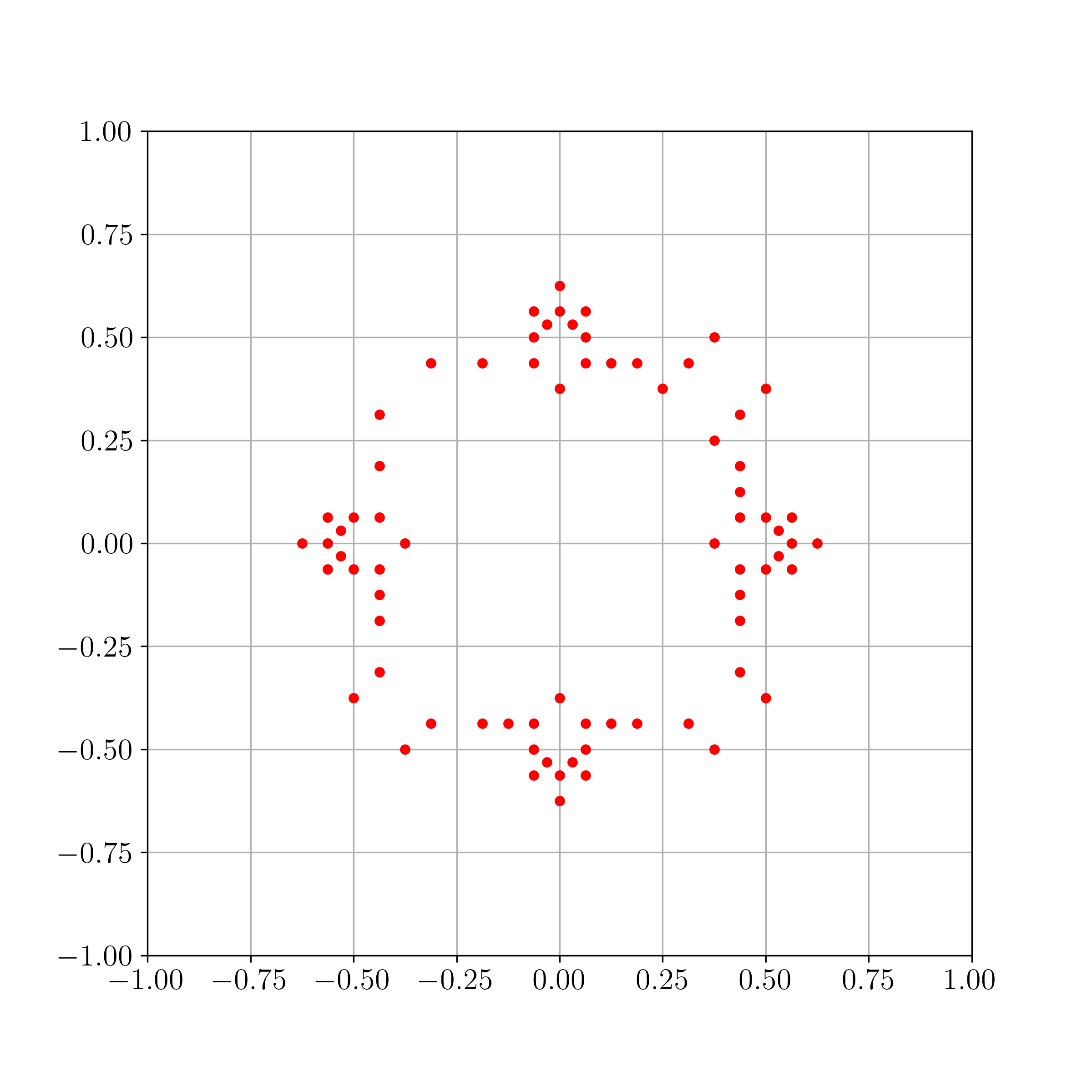}
 \caption{Filtering result for (b)}
 \end{subfigure}
\begin{subfigure}[b]{0.22\textwidth}
  \includegraphics[width=\textwidth]{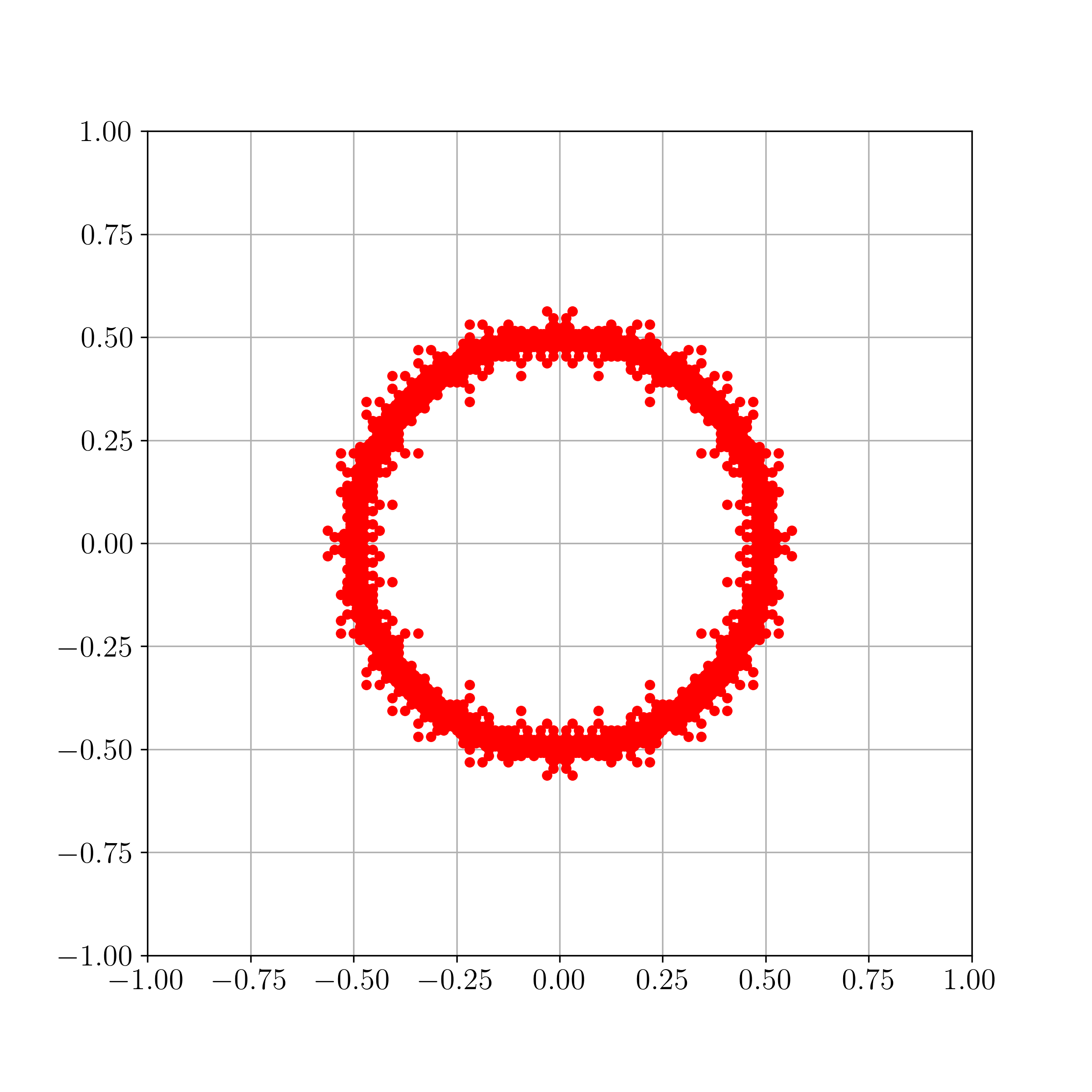}
 \caption{Filtering result for (c)}
 \end{subfigure}
 \begin{subfigure}[b]{0.22\textwidth}
  \includegraphics[width=\textwidth]{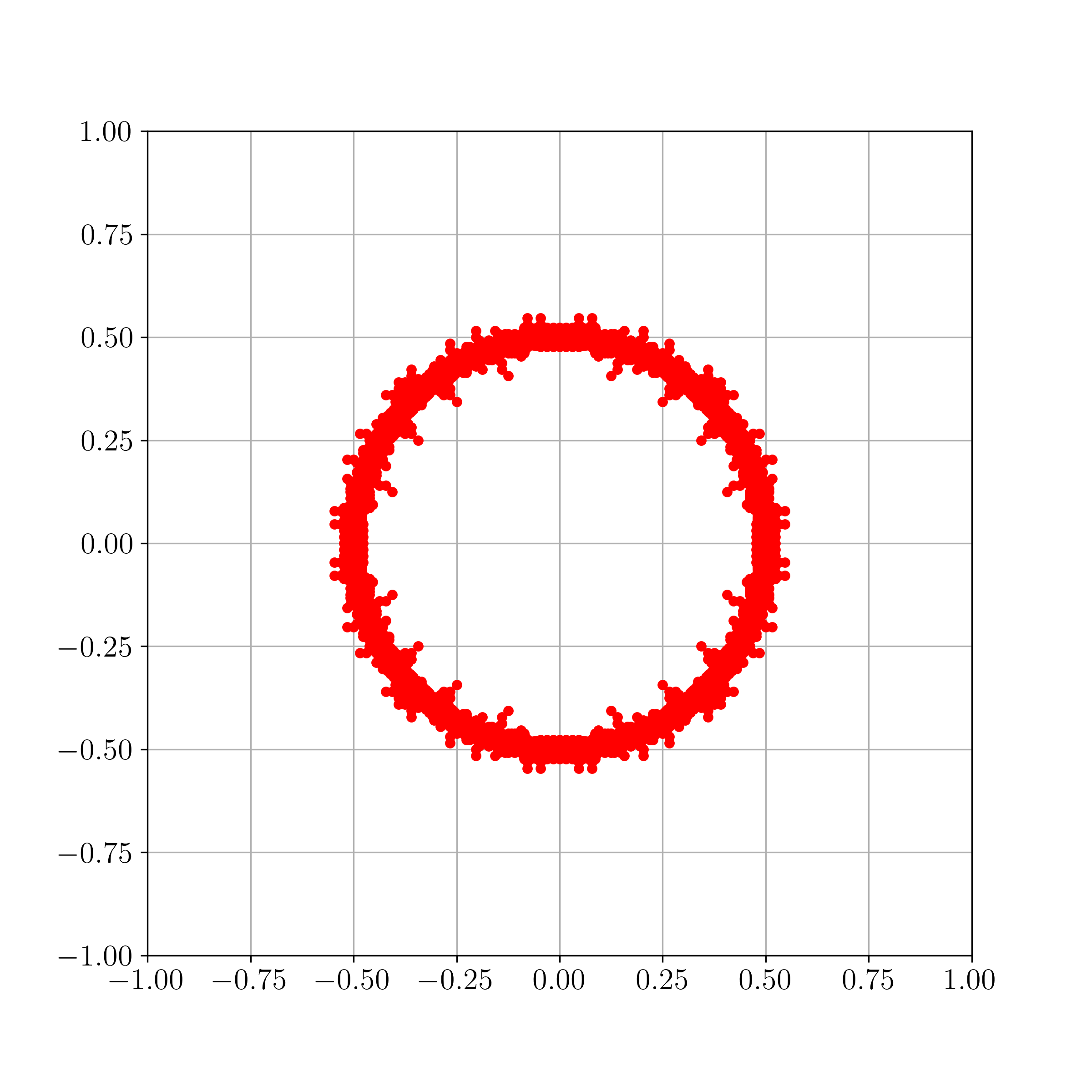}
 \caption{Filtering result for (d)}
 \end{subfigure}
 \caption{Raw data batches (top row) from AMR in \cite{confusion} and the corresponding filtered data (bottom row) using Algorithm \ref{alg:kde} with $\gamma=0.6$.}
\end{figure}

\vspace{.15in}

\noindent\textbf{Filtering using kernel density estimation (KDE).}
The first filtering method is based on \emph{kernel density estimation} \cite{kde1962,kde1956,silverman86density,deng2011density,bishop2006book}. 
Simply speaking, we first estimate the density value at each point $x$ and then remove the points with a relatively low density.
We describe the algorithm below.
Given a density kernel $k:\mathbb{R}^d\to[0,\infty)$ and a set of points $x_1,\dots,x_N$,
the kernel density estimator at $x$ is given by
\begin{equation}
\label{eq:kde}
    \rho(x) = \frac{1}{N}\sum_{i=1}^N \frac{1}{h^d}k\left(\frac{x-x_i}{h}\right),
\end{equation}
where $h>0$ is a length scale parameter (also known as bandwidth parameter \cite{KDEvariable1992,GPpost}, correlation length \cite{randomFields2011,panayot2017random}, or smoothing parameter \cite{kernelBandwidth1989}).
A representative example of a density kernel $k(\cdot)$ is the widely used Gaussian kernel 
$$k(x)=\frac{1}{(2\pi)^{d/2}}e^{-\frac{\norm{x}^2}{2}},\quad x\in\mathbb{R}^d.$$
We filter the dataset $X$ by evaluating the kernel density estimator in \eqref{eq:kde} for all points in $X$ and then keeping the ``high-density'' points for which $\rho(x)$ is larger than a threshold, i.e., $\rho(x) > \gamma\max\limits_{x\in X}\rho(x)$ for some prescribed threshold parameter $\gamma\in (0,1)$.
The set of the remaining ``high-density'' points, denoted by $\widetilde{X}$, will be used for the downstream maximum likelihood estimation outlined in Section \ref{sec:formulation} for detecting singularity.
It can be seen that the larger the threshold parameter $\gamma$ is, the smaller (or more concentrated) the filtered subset $\widetilde{X}$ will be.
The full filtering algorithm is presented in Algorithm \ref{alg:kde}.

\vspace{.15in}

\noindent\textbf{Filtering using $k$-nearest neighbors ($k$NN).}
The second filtering method relies on the \emph{$k$-nearest neighbors} search.
This approach implicitly indicates the density at a point $x$ by using the size of a cluster around $x$, where the ``cluster'' is represented by the $k$ nearest points in $X$ to $x$.
For each $x\in X$, we first compute its $k$-nearest neighbors in $X$ and then evaluate the sum of squared distances from the $k$ neighbors to $x$. A smaller distance indicates a smaller cluster, thus reflecting a higher density around $x$. Using this approach, ``high-density'' points in $X$ will be selected to form the filtered subset $\widetilde{X}$.
A threshold parameter $\gamma$ is used for the purpose of filtering, similar to the KDE-based approach. A larger $\gamma$ will lead to a smaller (more concentrated) filtered subset $\widetilde{X}$.
The $k$NN-based filtering algorithm is presented in Algorithm \ref{alg:knn}.

\begin{figure}
    \centering
    \includegraphics[width=0.9\linewidth]{./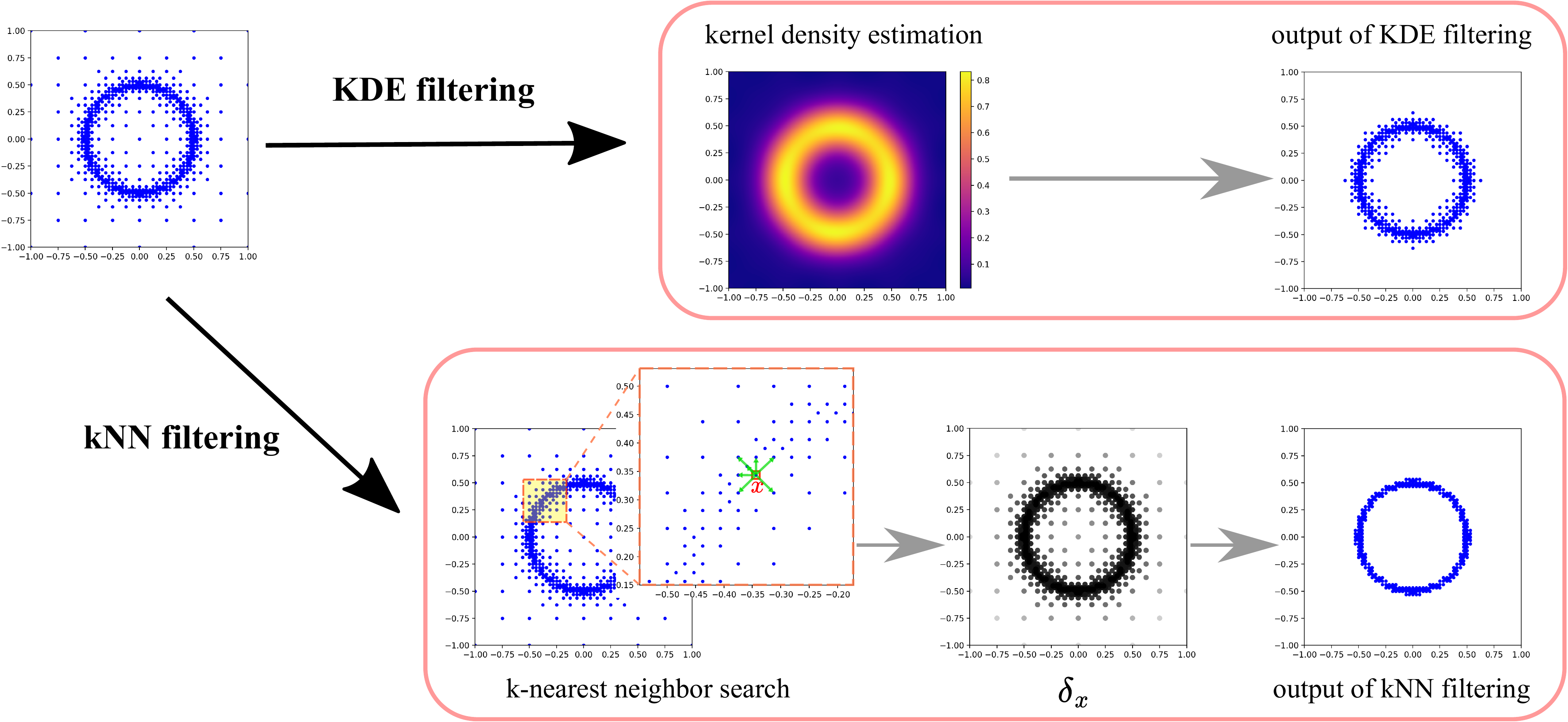}
    \caption{Illustration of two filtering methods based on kernel density estimation (KDE) and $k$ nearest neighbor ($k$NN) search}
    \label{fig:filtering}
\end{figure}

\begin{algorithm}[hbt!]
\caption{Filtering via kernel density estimation}
\label{alg:kde}
\begin{algorithmic}
\STATE \textbf{Input:} A finite set of points $X\subset \mathbb{R}^d$
\STATE \textbf{Hyperparameters:} length scale $h$ in \eqref{eq:kde}, filtering threshold $\gamma\in (0,1)$
\STATE \textbf{Output:} A subset $\widetilde{X}$ of $X$ that contains points in ``high-density'' regions with threhold parameter $\gamma$
\vspace{.1in}
\STATE Compute the kernel density $\rho(x)$ in \eqref{eq:kde} for each $x\in X$
\STATE Define $\rho_* = \max\limits_{x\in X} \rho(x)$
\STATE Initialize $\widetilde{X}=\emptyset$
\FOR{$x$ in $X$}
    \IF{$\rho(x) > \gamma\rho_*$}
        \STATE $\widetilde{X}=\widetilde{X}\cup \{x\}$
        \hfill \COMMENT{add $x$ if density estimate at $x$ is sufficient high}
    \ENDIF
\ENDFOR
\RETURN Filtered data $\widetilde{X}$
\end{algorithmic}
\end{algorithm}

\begin{algorithm}[hbt!]
\caption{Filtering via $k$-nearest neighbors}
\label{alg:knn}
\begin{algorithmic}
\STATE \textbf{Input:} A finite set of points $X\subset \mathbb{R}^d$
\STATE \textbf{Hyperparameters:} number of nearest neighbors $k$ in $k$NN, filtering threshold $\gamma\in (0,1)$
\STATE \textbf{Output:} A subset $\widetilde{X}$ of $X$ that contains points in ``high-density'' regions based on the number of neighbors relative to a threshold parameter $\gamma$
\vspace{.1in}
\STATE Apply $k$-nearest neighbors algorithm to $X$ to obtain $k$ neighbors for each $x$, stored as $\mathcal{N}_x$
\FOR{$x$ in $X$}
    \STATE Compute the sum of squared distances to the neighbors: $\delta_x=\sum\limits_{z\in \mathcal{N}_x} \norm{x-z}^2$
\ENDFOR
\STATE Compute $\delta_*=\min\limits_{x\in X}\delta_x$
\FOR{$x$ in $X$}
    \IF{$\delta_x<\delta_*/\gamma$}
        \STATE $\widetilde{X}=\widetilde{X}\cup \{x\}$
        \hfill \COMMENT{add $x$ if the cluster size of $\mathcal{N}_x$ is sufficiently small}
    \ENDIF
\ENDFOR
\RETURN Filtered data $\widetilde{X}$
\end{algorithmic}
\end{algorithm}

\paragraph{Computational efficiency}
For a training dataset with $N$ points, evaluating the kernel density estimator \eqref{eq:kde} at \emph{one} point induces $O(N)$ computational complexity, thus leading to a total of $O(N^2)$ complexity for the evaluation at $N$ points.
Similarly, performing $k$-nearest neighbor search for $N$ points will also require $O(N^2)$ computational complexity.
Such a computational bottleneck is well-known \cite{nando2005fast,bishop2006book} and a variety of methods have been developed to reduce the complexity.
For example, tree-based algorithms \cite{gray2000nbody,gray2003nonparametric} and hierarchical matrix techniques \cite{fastgauss1991,yang2003IEEE,hack2015book,smash,HiDR} can be used to accelerate the non-local kernel computation in kernel density estimation.
For $k$NN, a large number of techniques and software packages have been developed to improve the computational efficiency, including tree based nearest neighbor search \cite{kdtree1984,gray2003nonparametric,kdtree2006}, approximate nearest neighbors \cite{knn1998optimal,knn2009app,knn2018app,annoy}, gradient descent based search \cite{NNdescent2011}, etc.
The computational complexity can usually be reduced to $O(N\ln N)$ or $O(N)$ using these techniques.
We remark that the focus of the manuscript is on singularity detection, and computational accelerations will be investigated in a future date.

\subsection{The full data-driven SSL algorithm for singularity detection}
\label{sub:SSLfull}
Using filtering (Section \ref{sub:filtering}) as the pretext task and MLE (Section \ref{sec:formulation}) as the downstream task, a self-supervised learning framework can be built for singularity detection with unlabeled mesh data.
The full self-supervised singularity detection algorithm is summarized in Algorithm \ref{alg:alg2}.
If the given data is Type II, then we merge all batches into to a single one that contains all points.
Note that for Type II data, Algorithm \ref{alg:alg1} applies different weights to different batches in order to mitigate input perturbation or label corruption. Compared to Algorithm \ref{alg:alg1}, the SSL approach in Algorithm \ref{alg:alg2} does not distinguish Type I and Type II data formats much since it processes a single set that contains all data points.
More importantly, Algorithm \ref{alg:alg2} uses filtering to directly remove ``noise'' in the data, which helps to address input perturbation or label corruption.
The benefit of using filtering as a pretext task is two-fold:
(1) it reduces ``noise'' in the raw data and helps to improve the accuracy of singularity detection;
(2) using a filtered subset $\widetilde{X}$ of the given data $X$ as training data reduces the computational cost when minimizing the loss function, since $\widetilde{X}$ has a smaller size than $X$.
Therefore, using the SSL framework with filtering improves both accuracy and efficiency for singularity detection compared to the baseline Algorithm \ref{alg:alg1}.

\begin{remark}
\label{rm:AMR}
The performance of data-driven singularity detection depends on the quality of mesh data, which, if generated by adaptive mesh refinement, will rely on the a posteriori error estimator used for estimating the error and guiding the refinement (cf. \cite{ainsworthoden2000book,verf2013book}). Specifically, for PDEs with certain small-scale parameters, it is well-known that a \emph{non-robust} error estimator can possibly guide the mesh refinement towards regions where the function is smooth, generating excessive nodes \emph{away} from the singularity (cf. \cite{bernardi2000,localL2}). \emph{Robust} error estimators can avoid this issue and generate a good mesh refined towards the singularity \cite{bernardi2000,localL2,confusion}. Developing \emph{robust} a posteriori error estimators is crucial for resolving the solution singularity and the efficient numerical solution of challenging PDEs. Detailed study on this topic can be found in \cite{bernardi2000,verf1998reaction,verf2005confusion,ainsworth2011confusion,localL2,confusion}.
    
\end{remark}

\begin{algorithm}[hbt!]
\caption{Data-driven self-supervised singularity detection (\emph{with} filtering)}
\label{alg:alg2}
\begin{algorithmic}
\STATE \textbf{Input:} Type I data aggregate $X$ (nodes from a mesh) or Type II data batches $\XXi{0},\dots,\XXi{R}$ from $R$ steps of AMR
\STATE \textbf{Hyperparameters:} Basis functions $\phi_1,\phi_2,\dots$ in \eqref{eq:f}, filtering hyperparameters $\gamma$, $h$ or $k$
\STATE \textbf{Output:} A function $f$ that minimizes the loss $\widetilde{L}_1$ defined below
\IF{Input is Type II}
\STATE Define $X=\bigcup\limits_{i=1}^R \XXi{i}$
\ENDIF
\STATE Compute the filtered subset $\widetilde{X}$ by applying filtering to $X$ via Algorithm \ref{alg:kde} (with parameters $\gamma,h$) or Algorithm \ref{alg:knn} (with parameters $\gamma,k$)
\STATE Define the loss function 
$\widetilde{L}_1(\mathbf{c})=\sum\limits_{x\in \widetilde{X}} f(x)^2$
\text{with} $f$ \text{in} \eqref{eq:f}
\STATE Solve the constrained minimization 
$\mathbf{c}_*=\argmin\limits_{\norm{\mathbf{c}}=1} \widetilde{L}_1(\mathbf{c})$
in \eqref{eq:minL1} to obtain $\mathbf{c}_*$
\RETURN $f$ in the form \eqref{eq:f} with coefficients $\mathbf{c}_*$
\end{algorithmic}
\end{algorithm}

\section{Numerical experiments}
\label{sec:experiments}

In this section, we perform numerical experiments to investigate algorithms for singularity detection. 
In each experiment, the singularity set $\mathcal{S}$ is the set of polynomial roots in a two-dimensional rectangular domain $\Omega$ depending on the underlying problem. Polynomial ansatz is used in most experiments. For $(x,y)\in\mathbb{R}^2$, the general form of a polynomial with degree $n$ is given by
$$f(x,y) = \sum_{i=0}^n \sum_{j=0}^{n-i} c_{k}x^iy^j.$$
 The coefficient $c_{k}$ corresponding to the term $x^iy^j$ is indexed by:  
 \[
 k= i(n+2) -i(i+1)/2+j.
 \]
For example, when $n=2$, the polynomial expands as:
\begin{equation}
f(x,y) = c_0 + c_1y+ c_2y^2+c_3x+c_4xy+c_5x^2. \label{eq:polydegree2}
\end{equation}
Given the training data, the unknown polynomial coefficients will be calculated using the proposed singularity detection algorithm.
The choice of hyperparameters in each filtering method will be stated in the experiment. In particular, for KDE-based filtering, Silverman's rule \cite{silverman2018density} is used for the length scale parameter $h$ in all experiments, i.e.,
$h=\left[\frac{1}{4}N(d+2)\right]^{-\frac{1}{d+4}}$ for $N$ data points in $d$ dimensions.


Extensive experiments are conducted to investigate different aspects of the problem, and we give a brief overview as follows.
Example 1 applies the baseline Algorithm \ref{alg:alg1} to Type I and Type II data to demonstrate the possible pathological outcomes for singularity detection when using the raw data directly for training.
Example 2 illustrates the superior performance of the self-supervised singularity detection framework in Algorithm \ref{alg:alg2}, where filtering is used as a pretext task.
Example 3 considers singularity detection for various types of challenging singularities, including boundary layer, `X' shape, and concentric semicircles. Both KDE and $ k$NN-based filtering methods will be tested.
Example 4 presents singularity detection with Fourier basis functions to demonstrate the flexibility of the proposed framework.

\subsection{Example 1. Baseline Algorithm \ref{alg:alg1} (no filtering) for Type I and Type II data}
\label{exp:circle-type1-type2}

We first consider an example of the solution of the singularly-perturbed reaction-diffusion equation:
\begin{align}
-\varepsilon \Delta u + u &= g_1 \;\;\text{ in }\;\; \Omega\quad \text{with}\;\; \varepsilon=10^{-4}, \nonumber\\
u & = g_b \;\;\text{ on }\;\partial\Omega. 
\label{eq:reaction-difussion}
\end{align}
The exact solution is chosen as
$$u(x,y)= \tanh \left(\varepsilon^{-1/2} (x^2 + y ^2 - \frac{1}{4}))\right)-1.$$ 
This is a commonly used test problem in the adaptive mesh refinement and a posteriori error estimation literature (cf. \cite{caizhang2010,confusion}).
This solution captures a reaction-diffusion phenomenon with a localized steep transition influenced by the small parameter $\varepsilon$. The transition occurs at an interior singular layer (see Figure~\ref{fig:circle}) given by the equation below
\begin{equation}
    x^2+y^2=\frac{1}{4}.\label{eq:singularity-circle}
\end{equation}
For singularity detection, we choose $f$ to be a polynomial of degree two as in \eqref{eq:polydegree2}.

\paragraph{Datasets}
The dataset is generated by AMR using the robust hybrid estimator proposed in~\cite{confusion}. 
The initial mesh in AMR consists of $4\times 4$ congruent squares, each of which is partitioned into two triangles connecting bottom-left and top-right corners. After 44 refinement steps, we obtained a mesh containing $13664$ nodes in total. We denote this large array of nodes as $X_0$ and use much smaller subsets to construct the training data.
For Type I data format, four different training sets are used in this experiment:
\begin{itemize}
    \itemsep0in
    \item Figure \ref{fig:1a}: the first $200$ nodes in $X_0$;
    \item Figure \ref{fig:1b}: the first $300$ nodes in $X_0$;
    \item Figure \ref{fig:1c}: the subset of $X_0$ that contains the $200$th to the $300$th nodes;
    \item Figure \ref{fig:1d}: the first $2500$ nodes in $X_0$.
\end{itemize}
For Type II data format, we use the batches generated after $R=17$ AMR steps $\XXi{0},\XXi{1},\dots,\XXi{R}$, whose union contains $322$ nodes in total.
To study the performance of the baseline singularity detection in Algorithm \ref{alg:alg1}, three different choices of the variance parameter $\sigma_i^2$ for batch $\XXi{i}$ are considered.
\begin{equation}
\label{eq:weights}
\sigma_i^2 = 1, \quad \sigma_i^2 = \frac{1}{2} \cdot 2^{2(R-i)},\quad \sigma_i^2 = \frac{1}{2} \cdot 4^{2(R-i)}\quad (i=0,\dots,R).
\end{equation}
Note that in the second choice and the third choice, the weight $\frac{1}{2\sigma_i^2}$ in \eqref{eq:minL2} for the $i$-th batch increases exponentially with $i$.

\begin{figure}
\centering
  \begin{subfigure}[t]{0.25\textwidth}
\includegraphics[width=\textwidth]{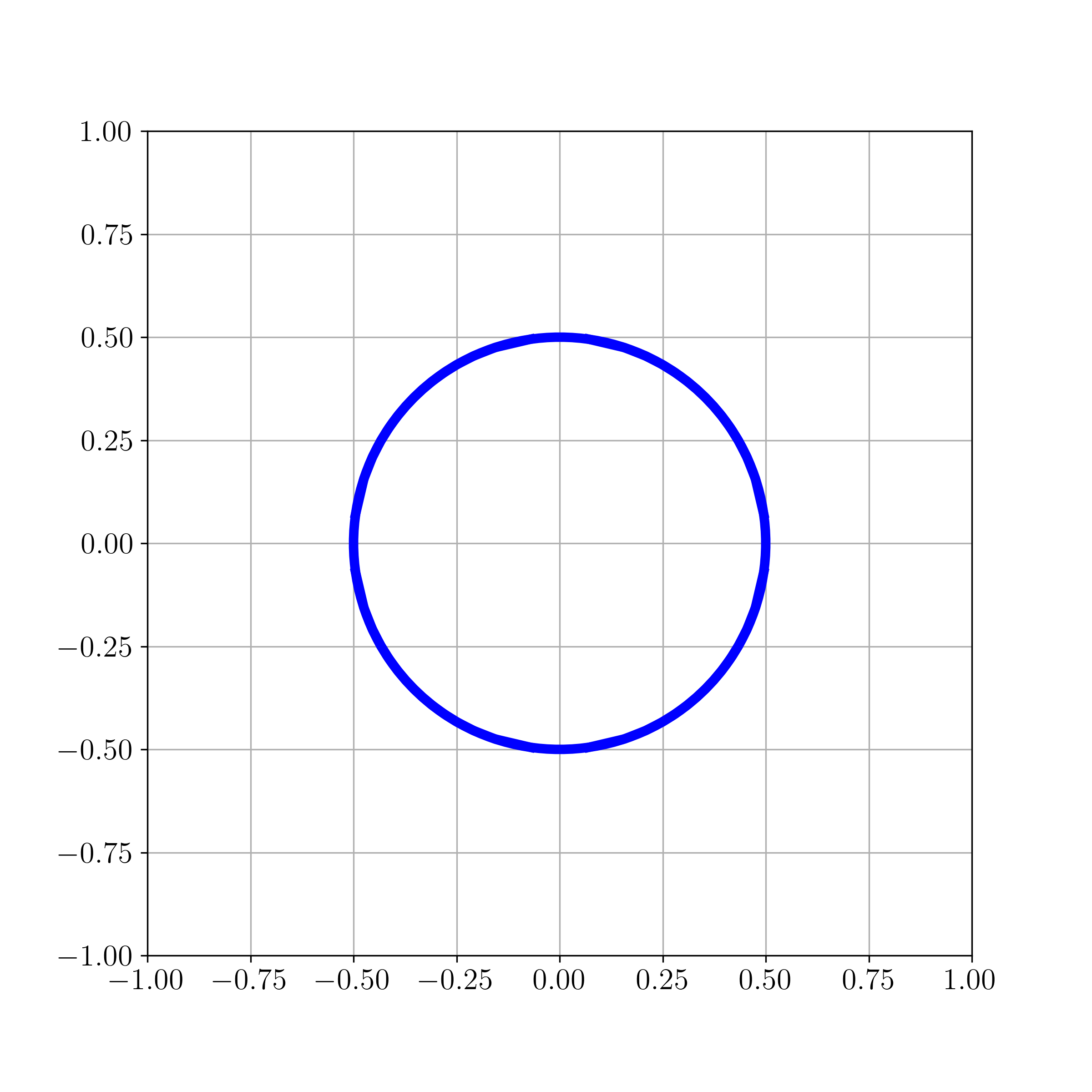}
\end{subfigure}
\hspace{.1in}
\begin{subfigure}[t]{0.25\textwidth}
\includegraphics[width=\textwidth]{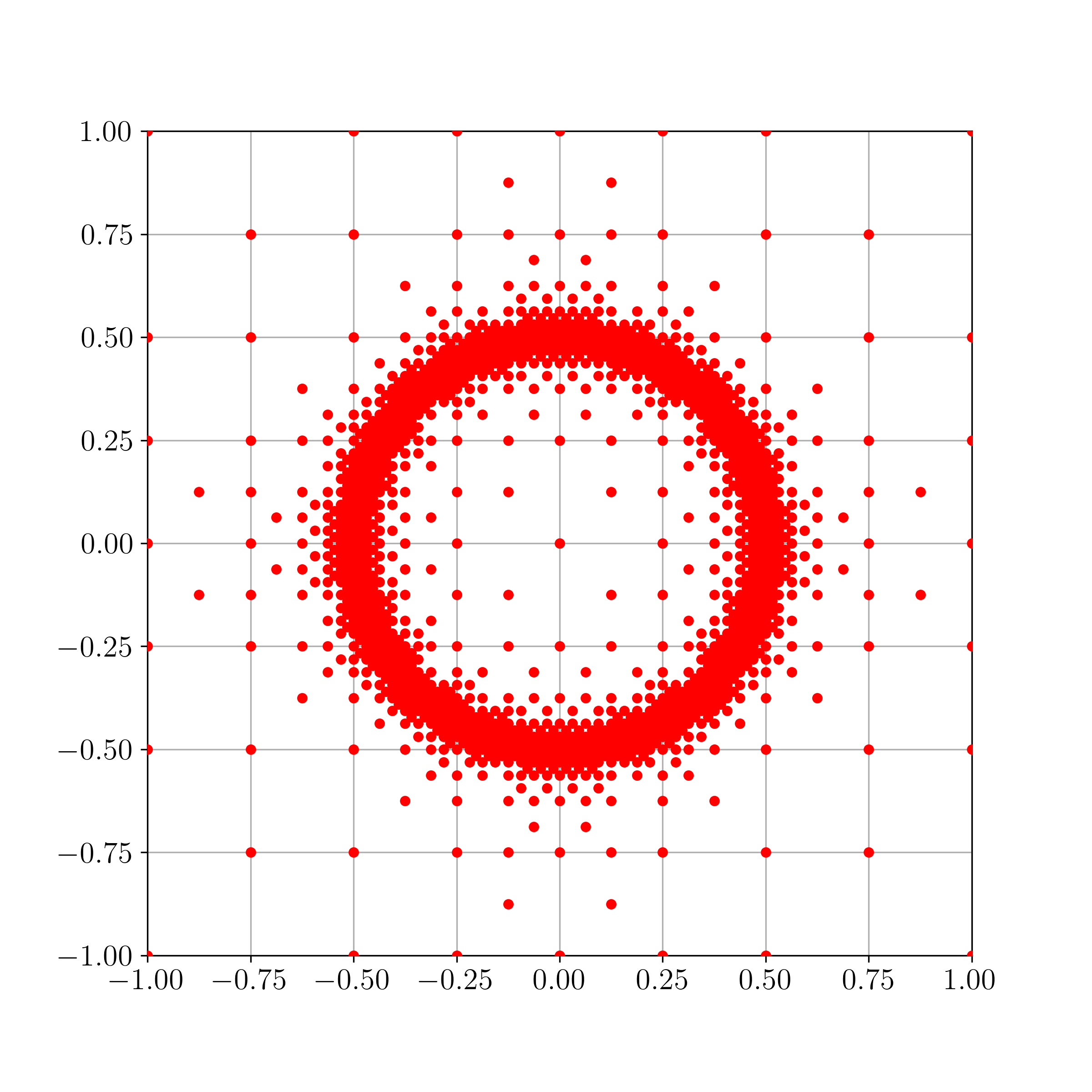}
\end{subfigure}
\caption{Left: the exact singularity given by $x^2+y^2=\frac{1}{4}$ for the PDE in \ref{eq:reaction-difussion}. Right: mesh data $X_0$ from AMR \cite{confusion} with 13,664 nodes. Subsets of $X_0$ with much smaller sizes will be used as training data in Example 1.}
\label{fig:circle}
\end{figure}

\paragraph{Observations for Figure \ref{fig:fail}} 
The singularity detection results for four Type I datasets are shown in Figure \ref{fig:fail}.
We see from the first three plots in Figure \ref{fig:fail} that the baseline singularity detection presented in Algorithm \ref{alg:alg1} can yield incorrect estimates for small-scale Type I data. For example, highly pathological results are observed in Figure \ref{fig:1a} and Figure \ref{fig:1b}, where the predicted singularity set is far from being a circle. 
Figure \ref{fig:1c} illustrates better detection with the dataset being the $200$th to the $300$th nodes in $X_0$, which contains fewer ``outliers'' than the datasets in Figure \ref{fig:1a} and Figure \ref{fig:1b}. 
Nonetheless, the learned ellipse is still noticeably different from the true singularity (a circle). 
When a much larger training set is used, as shown in Figure \ref{fig:1d}, the detected singularity set using Algorithm \ref{alg:alg1} tends to approximate the true singularity (a circle centered at the origin with radius $\frac{1}{2}$) quite accurately. 
The results suggest that while access to tail data from AMR can improve the reliability of Algorithm \ref{alg:alg1}, it is not advantageous unless the training set is sufficiently large with enough many points concentrated near the singularity to alleviate the impact of ``outliers''. In general, we see that using the baseline Algorithm \ref{alg:alg1} to detect singularity can be quite sensitive to input perturbation and label corruption.
It should also be noted that even though a larger dataset can help achieve better detection, the computational cost can become substantially higher.

\begin{figure}[h]
    \centering
    \begin{subfigure}[t]{0.23\textwidth}
    \includegraphics[width=\textwidth]{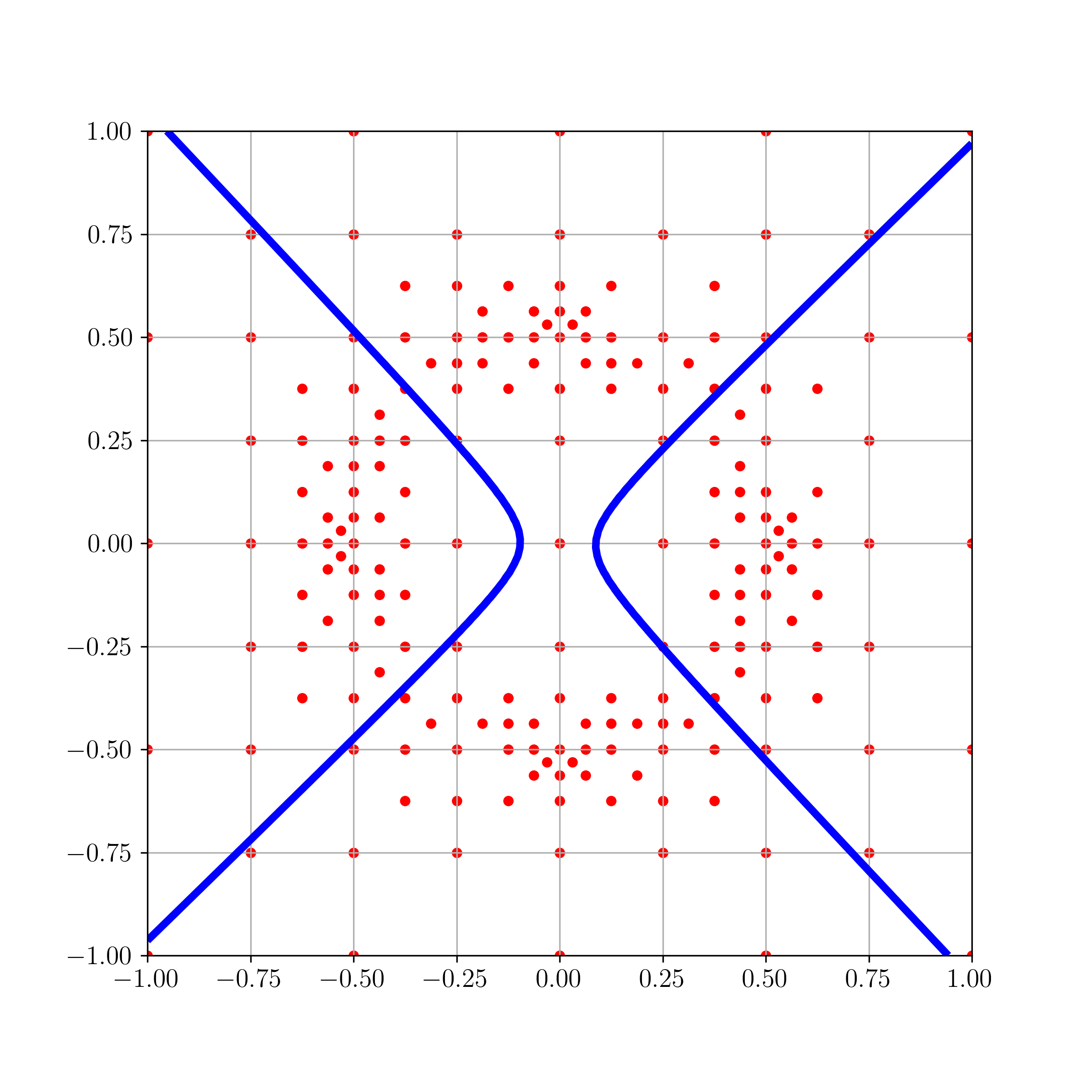}
    \caption{The first $200$ nodes in $X_0$ as training data.} \label{fig:1a}
    \end{subfigure}
     ~
     \begin{subfigure}[t]{0.23\textwidth}
    \includegraphics[width=\textwidth]{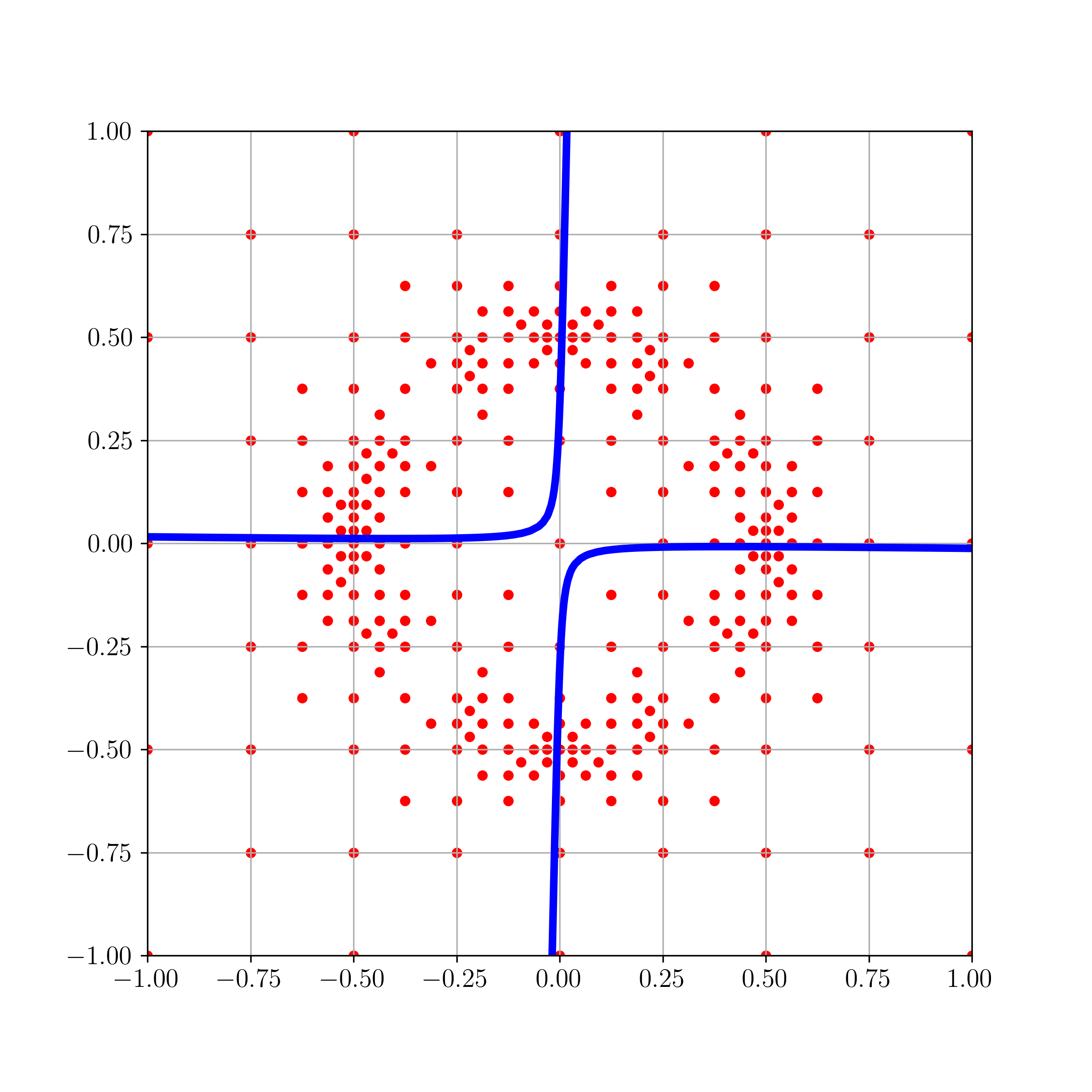}
    \caption{The first $300$ nodes in $X_0$ as training data.} \label{fig:1b}
    \end{subfigure}
     ~
    \begin{subfigure}[t]{0.23\textwidth}
    \includegraphics[width=\textwidth]{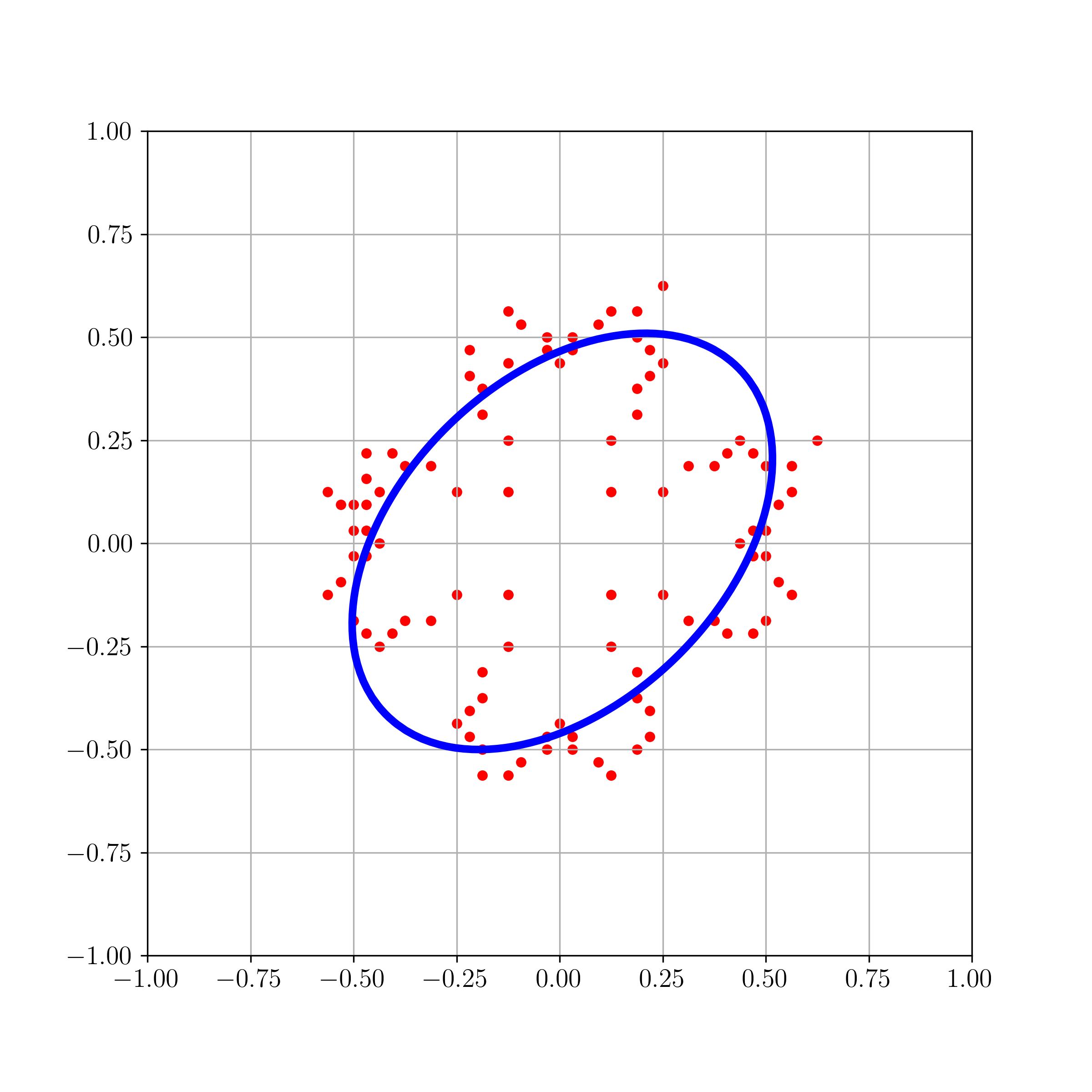}
    \caption{The $200$th to the $300$th nodes in $X_0$ as training data.}
    \label{fig:1c}
    \end{subfigure}
    ~
    \begin{subfigure}[t]{0.23\textwidth}
    \includegraphics[width=\textwidth]{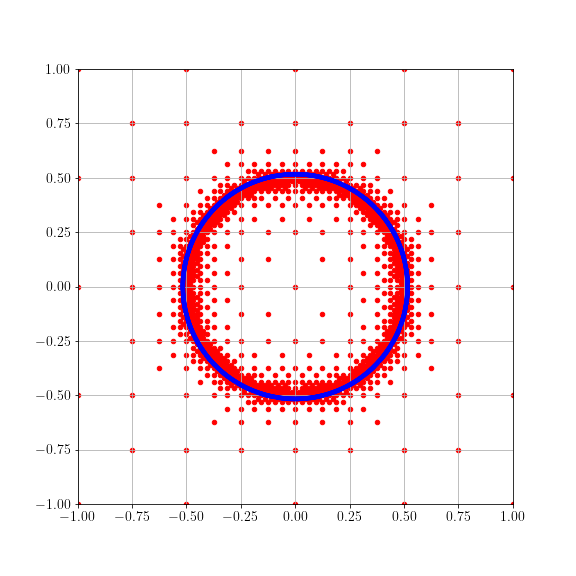}
    \caption{The first $2500$ nodes in $X_0$ as training data.}
    \label{fig:1d}
    \end{subfigure}
\caption{Example 1. Baseline singularity detection Algorithm \ref{alg:alg1} for Type I data can yield pathological results for small training sets. Here, the dots represent the training data, and the curve represents the learned singularity set.}
    \label{fig:fail}
\end{figure}

\paragraph{Observations for Figure \ref{fig:weights}}
The performance of Algorithm \ref{alg:alg1} for the Type II dataset ($322$ nodes in total) with three different variance parameters is shown in Figure \ref{fig:weights}.
It can be seen that the third choice of $\sigma_i$ yields the best result. Compared to Type I data in Figure \ref{fig:1b} with a similar number of nodes, we see that using weighted data batches can put more emphasis on the tail batches with fewer outliers, thus producing a more reliable approximation of the exact singularity.
Meanwhile, properly weighted Type II data batches can also reduce the amount of data needed to achieve a reasonable estimation. The result from Figure \ref{fig:weights} also encourages putting greater emphasis on the tail batches where the data points are more relevant (closer to the singularity) in deeper refinement stages.

Overall, this experiment shows that the batch-wise Type II data generally gives better results than Type I data when Algorithm \ref{alg:alg1} is used (without filtering the raw data). The ability to adjust the variance parameter $\sigma_i^2$ for Type II data batches allows one to mitigate the effect of ``outliers'' in the training data (nodes from the initial refinement stages of AMR).
This observation highlights the need to preprocess the Type I data to obtain better approximations to the singularity. 
This will be investigated in Section \ref{sec:preprocessing} for evaluating the performance of Algorithm \ref{alg:alg2}, where filtering is used as the pretext task in the SSL framework.

\begin{figure}[h]
 \centering
    \begin{subfigure}[b]{0.31\textwidth}
    \includegraphics[width=\textwidth]{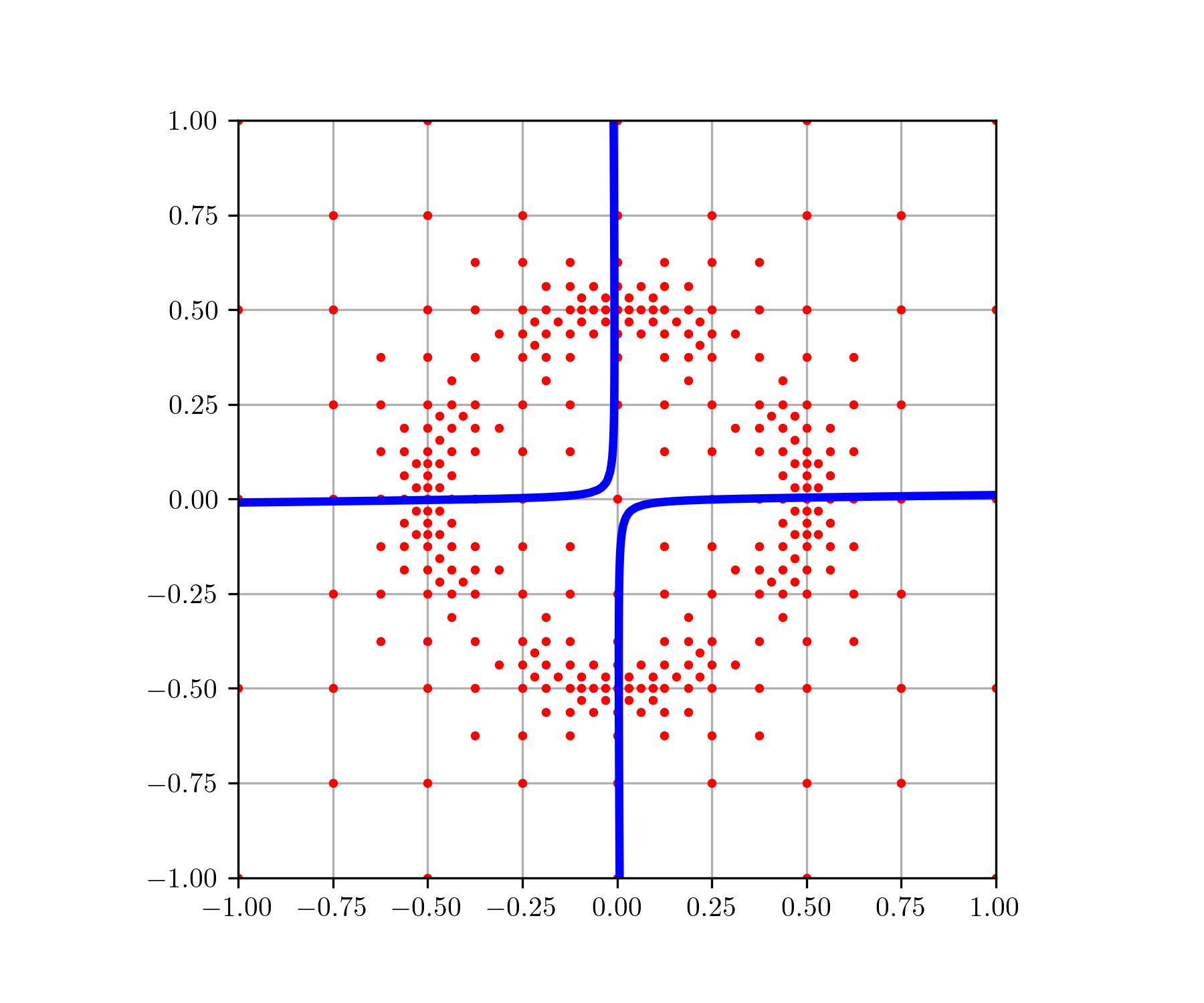}
    \caption{$\sigma_i^2=1$ in \eqref{eq:weights}}
    \label{fig:weight1}
    \end{subfigure}
    ~
    \begin{subfigure}[b]{0.31\textwidth}
    \includegraphics[width=\textwidth]{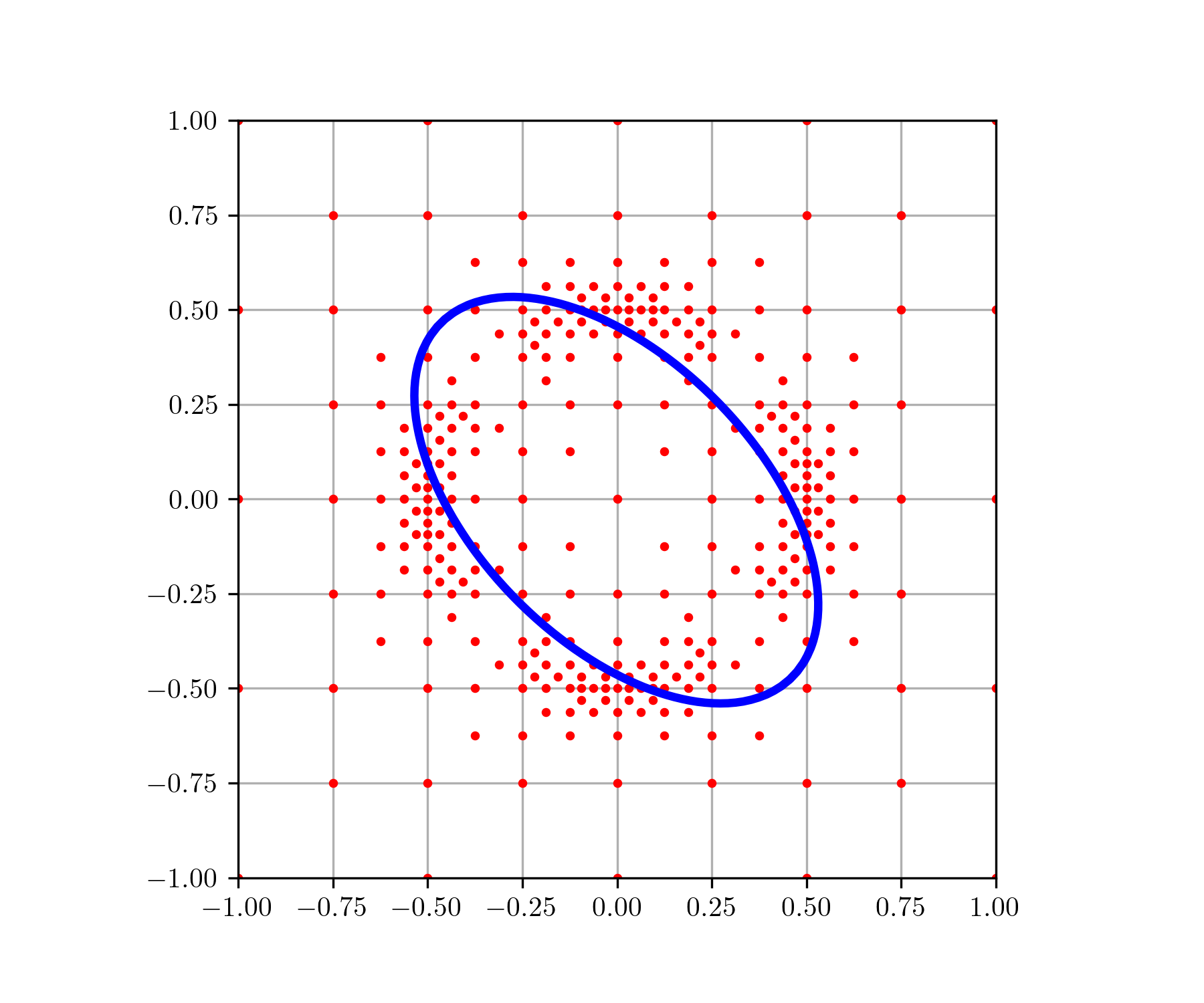}
    \caption{$\sigma_i^2=\frac{1}{2} \cdot 2^{2(R-i)}$ in \eqref{eq:weights}} 
    \label{fig:weight2}
    \end{subfigure}
    ~
    \begin{subfigure}[b]{0.31\textwidth}
    \includegraphics[width=\textwidth]{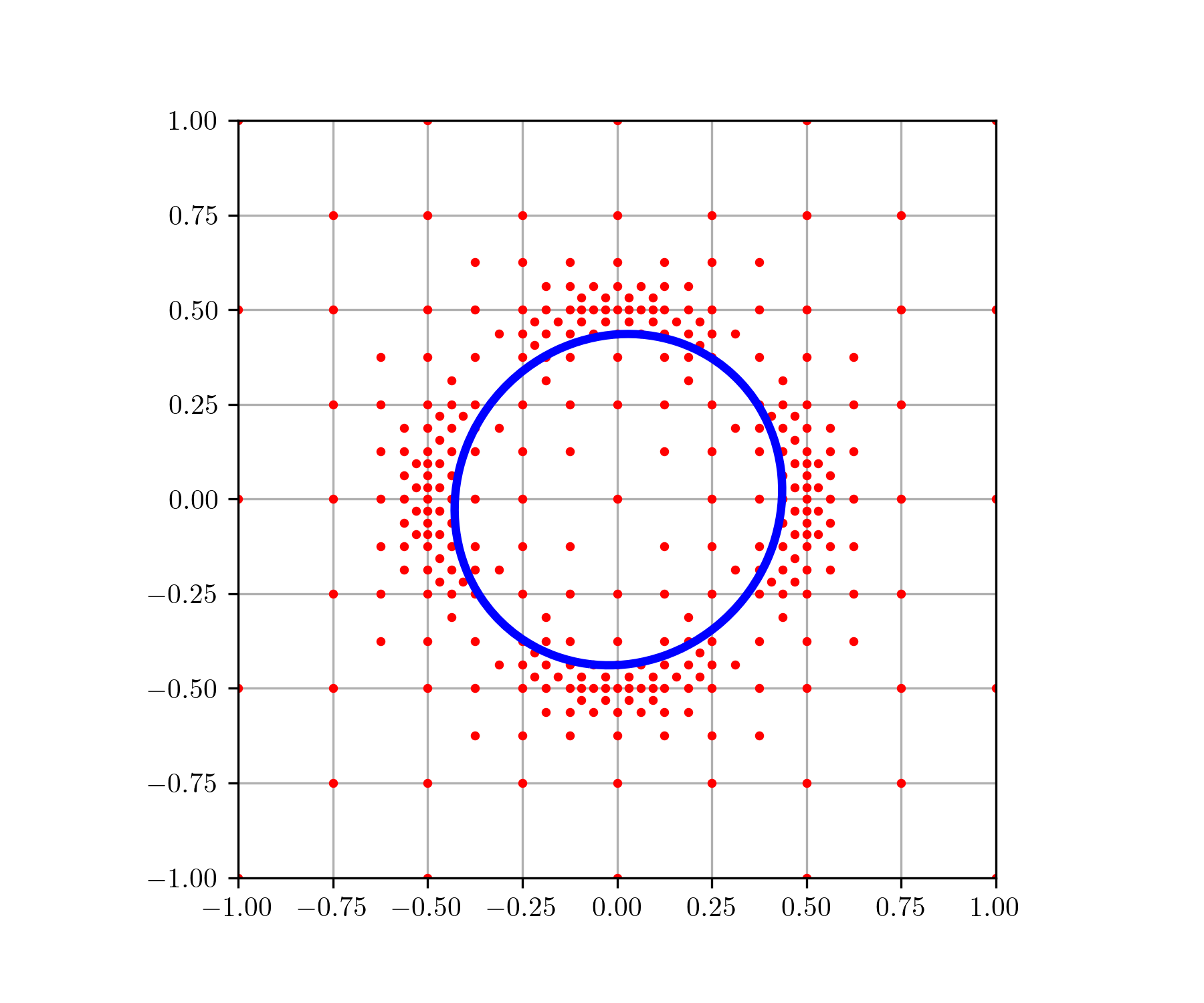}
    \caption{$\sigma_i^2=\frac{1}{2} \cdot 4^{2(R-i)}$ in \eqref{eq:weights}}
    \label{fig:weight3}
    \end{subfigure}
    \caption{Example 1. Baseline singularity detection (Algorithm \ref{alg:alg1}) for Type II data (322 nodes in total) with three choices of variance parameters $\sigma_i^2$. Red dots represent the training dataset, and the blue curve corresponding to the solid line represents the learned singularity using Algorithm~\ref{alg:alg1}  (without filtering).}\label{fig:weights}
\end{figure}

\subsection{Example 2. Algorithm \ref{alg:alg2} for Type I data}
\label{sec:preprocessing}

We see from Example 1 that Algorithm \ref{alg:alg1} may fail for small-scale Type I data. In this experiment, we demonstrate the benefits of filtering by applying Algorithm \ref{alg:alg2} to two Type I datasets: a small-scale set containing the first $300$ nodes from $X_0$ and $X_0$ itself (with $13664$ nodes) as a large-scale set.
We use KDE-based filtering with threshold parameter $\gamma=0.6$.

The results are shown in Figure \ref{fig:Ex3radius}.
The first row in Figure \ref{fig:Ex3radius} shows training data (dots) together with the detected singularity (curve).
In the second row, each plot shows the ``radius'' function $r(x,y)$ of the singularity set, which is the distance from a point $(x,y)$ on the true or detected singularity curve to the origin.
Two radius functions are graphed in each plot, corresponding to the true singularity curve and the detected singularity curve, respectively.
For the true singularity (a circle), $r(x,y)=0.5$ is constant and is represented by a solid red line.
For the detected singularity, to graph $r(x,y)$, we choose $100$ points from the detected curve and compute $r(x,y)$ at these locations.
These values are plotted using the symbol '+'.

It can be seen from Figure \ref{fig:Ex3radius} that the filtering procedure in Algorithm \ref{alg:alg2} successfully removes the ``outliers'' (nodes far away from the singularity) in the training data. 
For a small-scale dataset, this (Figure \ref{fig:Ex3-300-kde}) corrects the pathological result in Figure \ref{fig:Ex3-300} computed by Algorithm \ref{alg:alg1}. As can be seen from Figure \ref{fig:Ex3-radius-300kde}, even for small-scale data, the detected singularity set by Algorithm \ref{alg:alg2} is quite close to the true circle with radius $0.5$, where the detected radius $r(x,y)$ is around $0.504$ at any detected singularity. 
For large-scale dataset, by comparing Figure \ref{fig:Ex3-radius-X} and Figure \ref{fig:Ex3-radius-Xkde}, it is easy to see that though Algorithm \ref{alg:alg1} yields an accurate estimation of the singularity (with radius function around $0.506$), the SSL-based Algorithm \ref{alg:alg2} with filtering can substantially improve the accuracy, with radius function around $0.5006$, much closer to the true value of $0.5$.
By comparing Figure \ref{fig:Ex3-radius-300kde} and Figure \ref{fig:Ex3-radius-X}, we see that the filtering-based Algorithm \ref{alg:alg2} using only 300 nodes produces comparable (slightly better, in fact) result to Algorithm \ref{alg:alg1} with a much larger training set (13664 nodes). This demonstrates the advantage of Algorithm \ref{alg:alg2} to significantly reduce computational burden without sacrificing the estimation accuracy.

\begin{figure}
 \begin{subfigure}[t]{0.235\textwidth}
    \includegraphics[width=\textwidth]{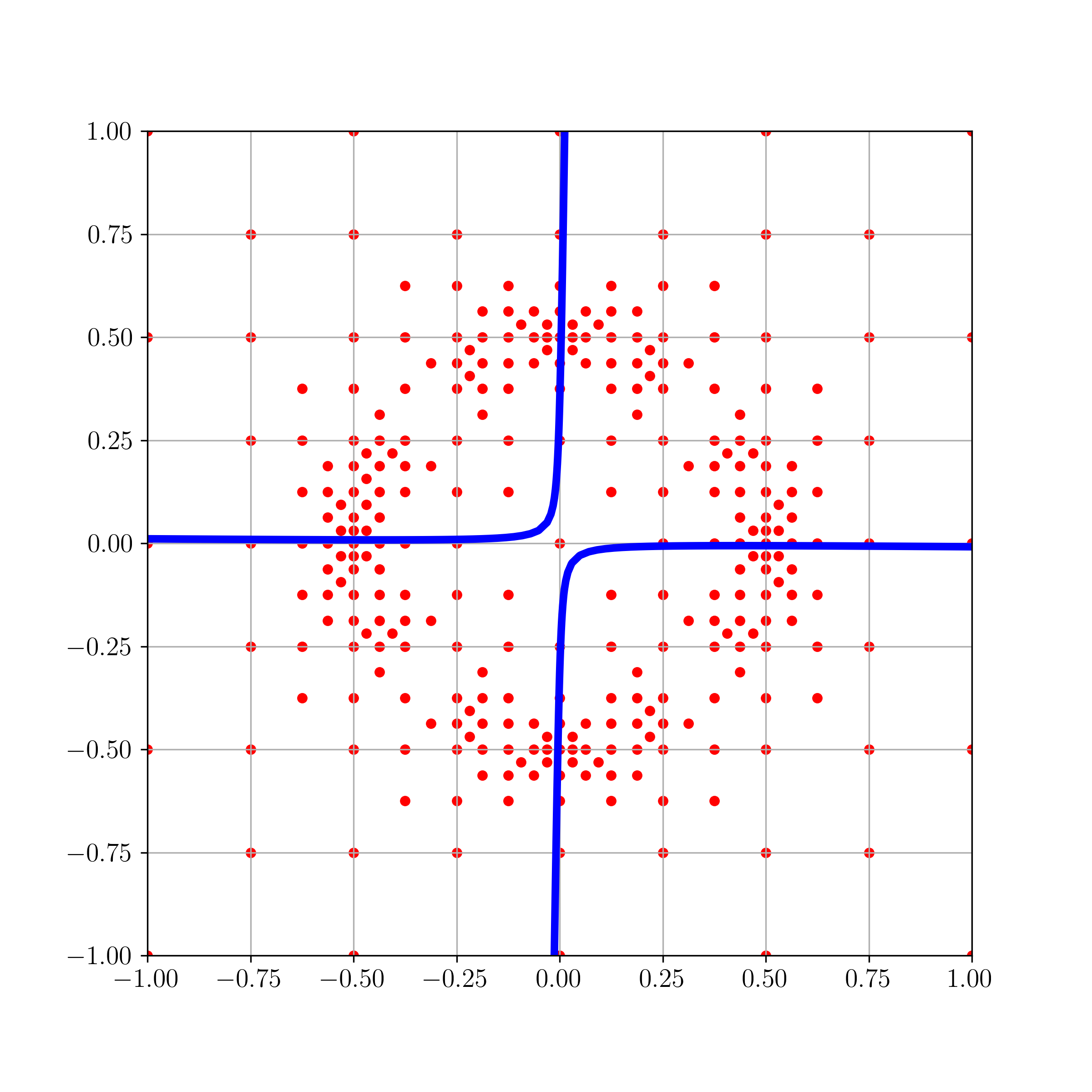}
    \caption{Algorithm \ref{alg:alg1} for small-scale data: first $300$ nodes in $X_0$}
    \label{fig:Ex3-300}
    \end{subfigure}
    ~
    \begin{subfigure}[t]{0.235\textwidth}
    \includegraphics[width=\textwidth]{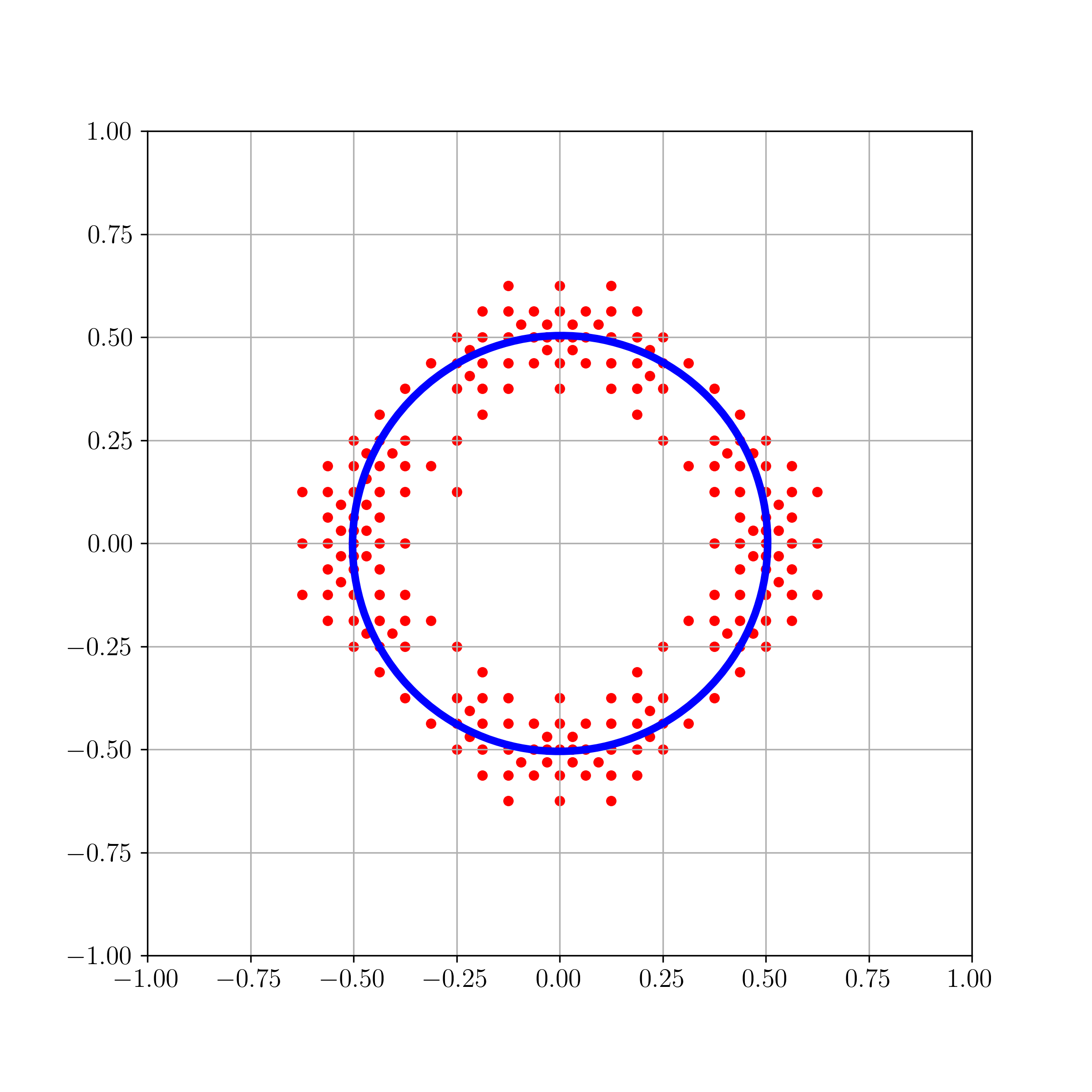}
    \caption{Algorithm \ref{alg:alg2} for small-scale data: first $300$ nodes in $X_0$}
    \label{fig:Ex3-300-kde}
    \end{subfigure}
    ~
 \begin{subfigure}[t]{0.235\textwidth}
    \includegraphics[width=\textwidth]{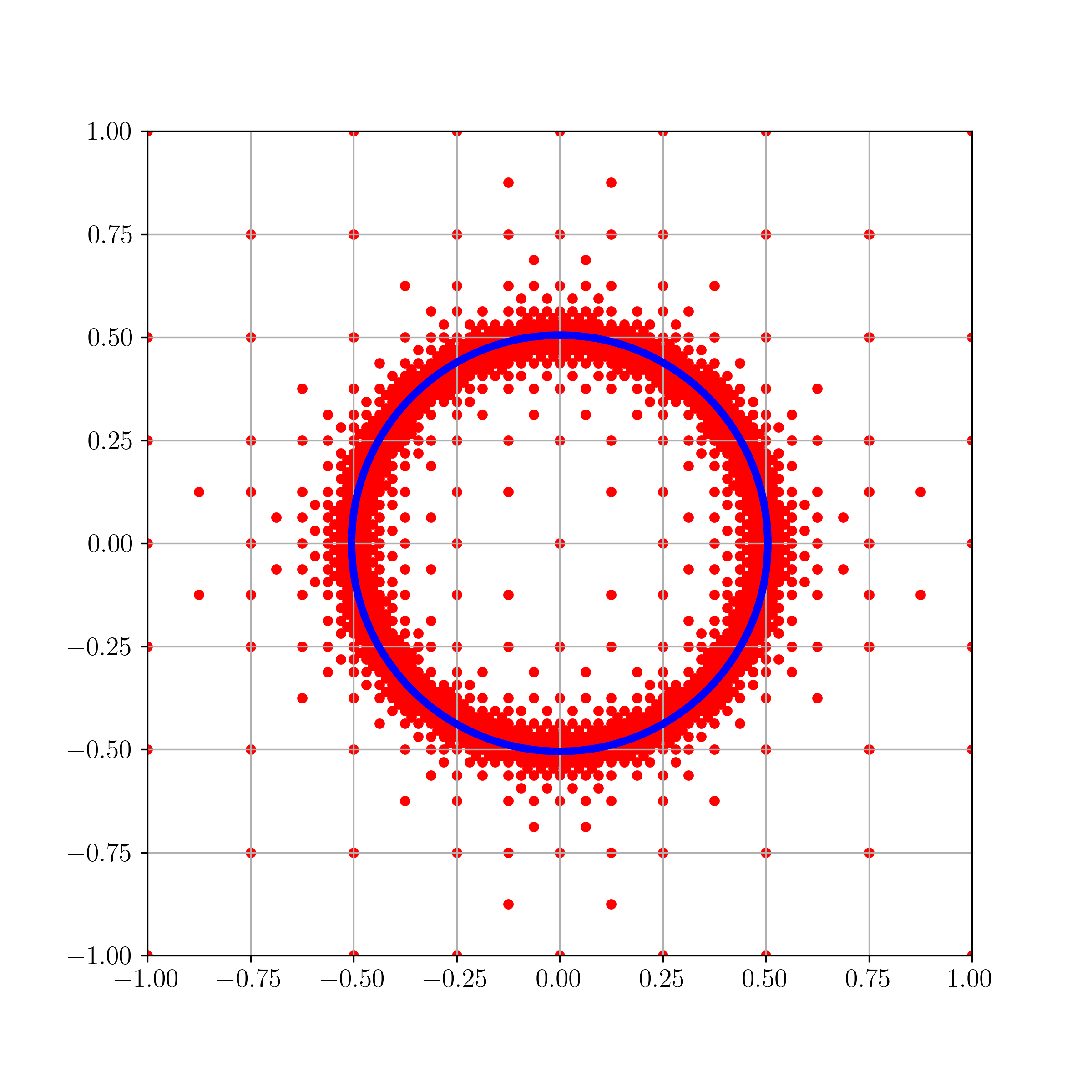}
    \caption{Algorithm \ref{alg:alg1} for $X_0$ (13664 nodes)}
    \label{fig:Ex3-C}
    \end{subfigure}
    ~
    \begin{subfigure}[t]{0.235\textwidth}
    \includegraphics[width=\textwidth]{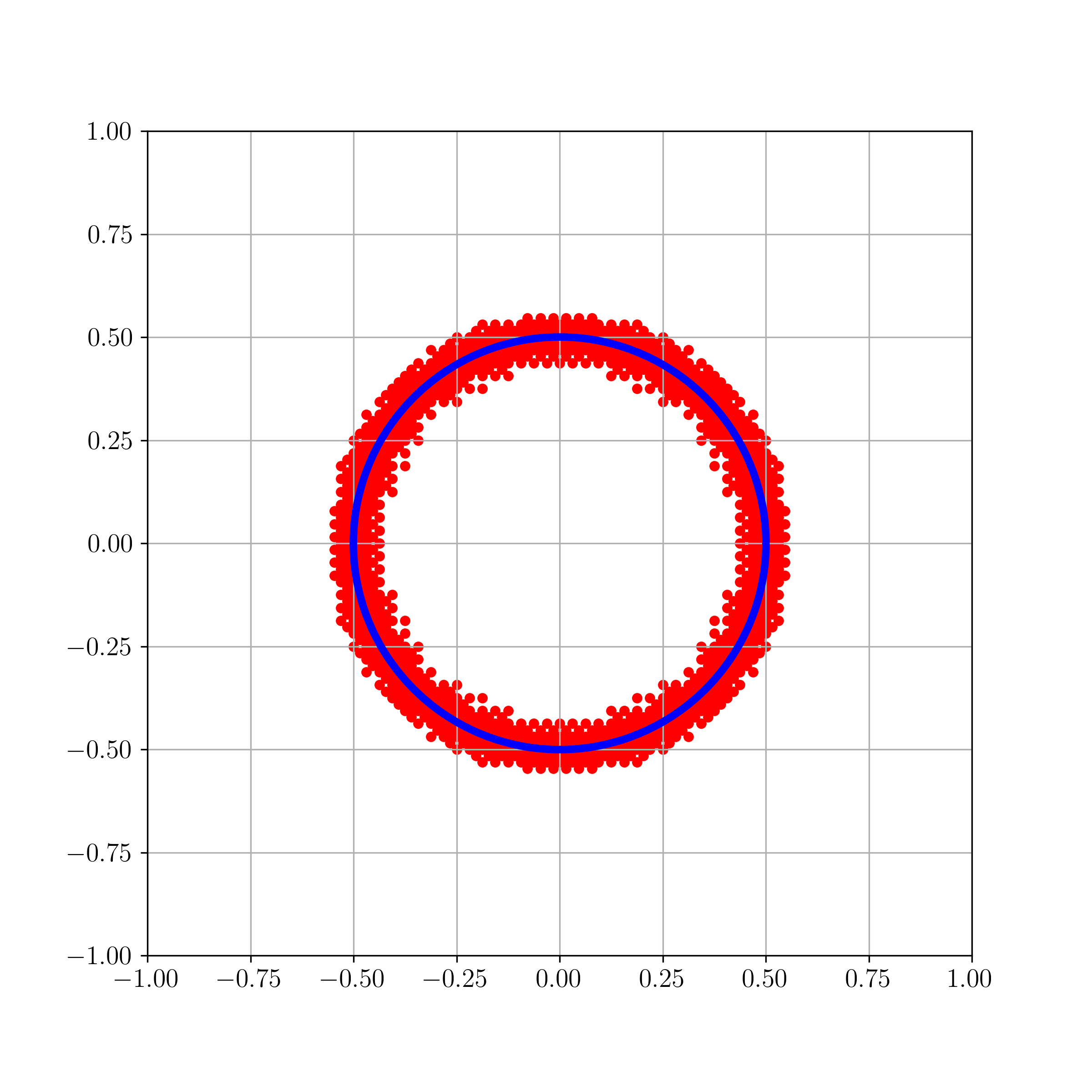}
    \caption{Algorithm \ref{alg:alg2} for $X_0$ (13664 nodes)}
    \label{fig:Ex3-C-kde}
    \end{subfigure}\\
    
     \begin{subfigure}[t]{0.235\textwidth}
    \includegraphics[width=\textwidth]{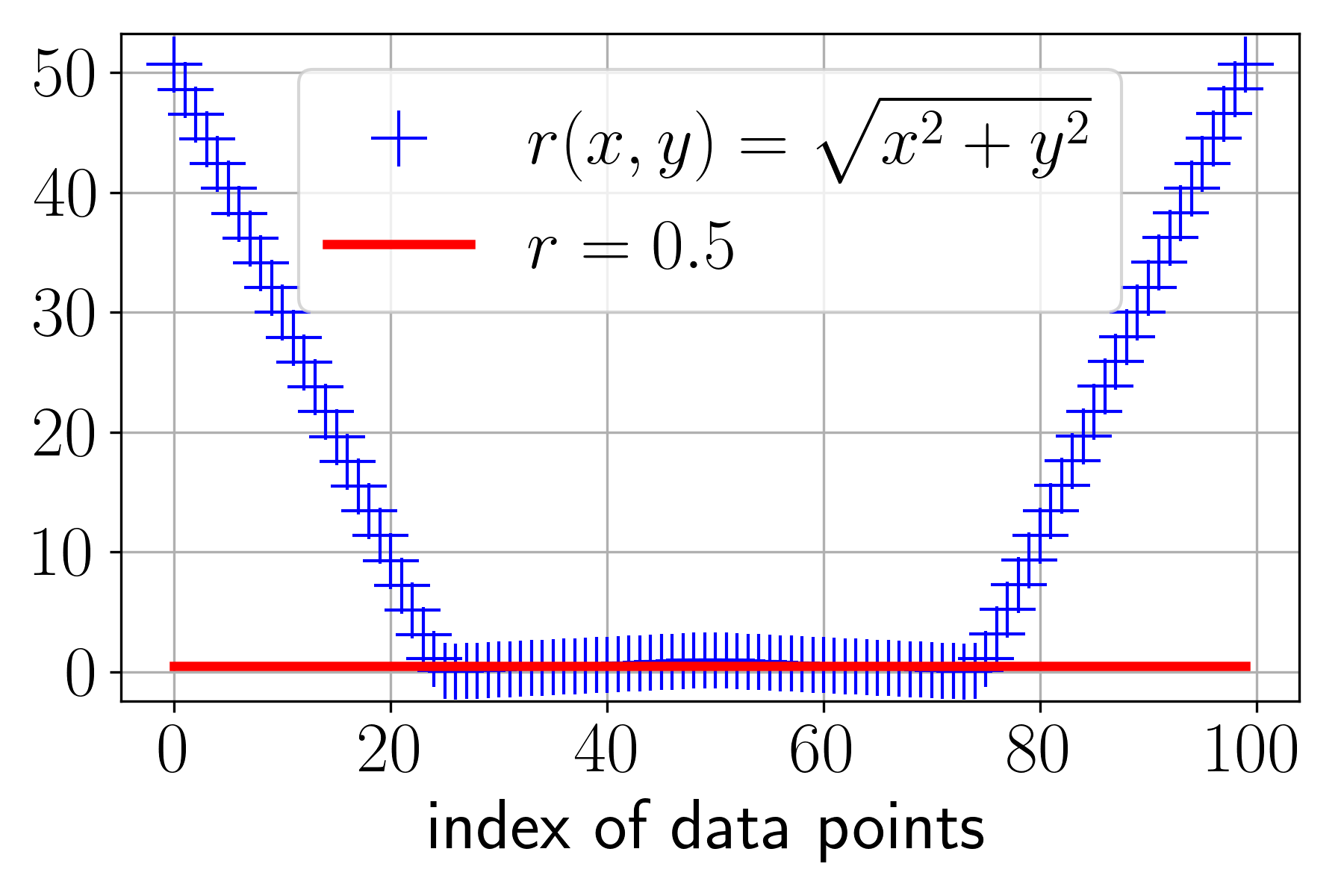}
        \caption{Radius function of singularity for Figure (a)}
    \label{fig:Ex3-radius-300}
    \end{subfigure}
    ~
  \begin{subfigure}[t]{0.235\textwidth}
    \includegraphics[width=\textwidth]{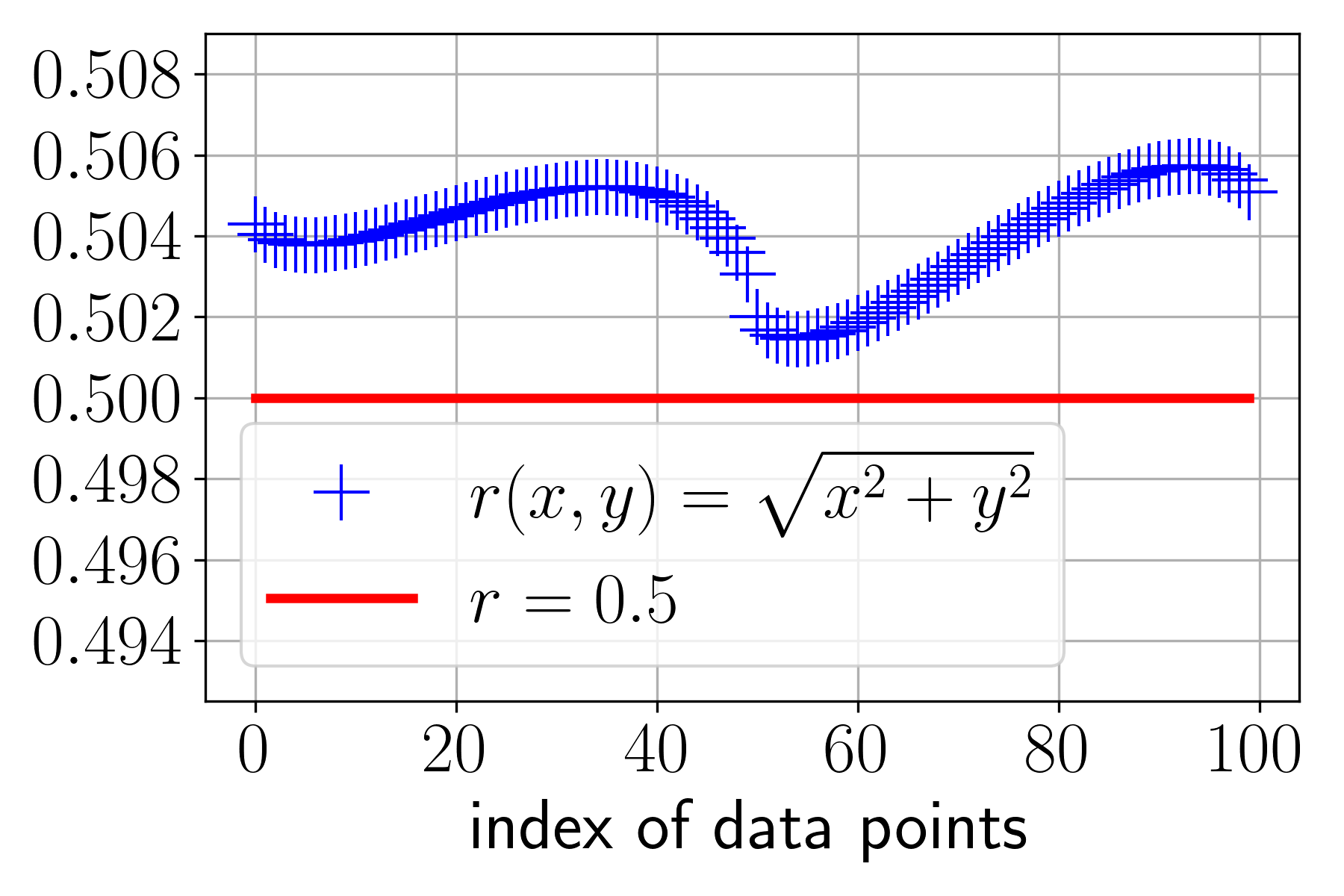}
        \caption{Radius function of singularity for Figure (b)}
    \label{fig:Ex3-radius-300kde}
    \end{subfigure}
    ~
     \begin{subfigure}[t]{0.235\textwidth}
    \includegraphics[width=\textwidth]{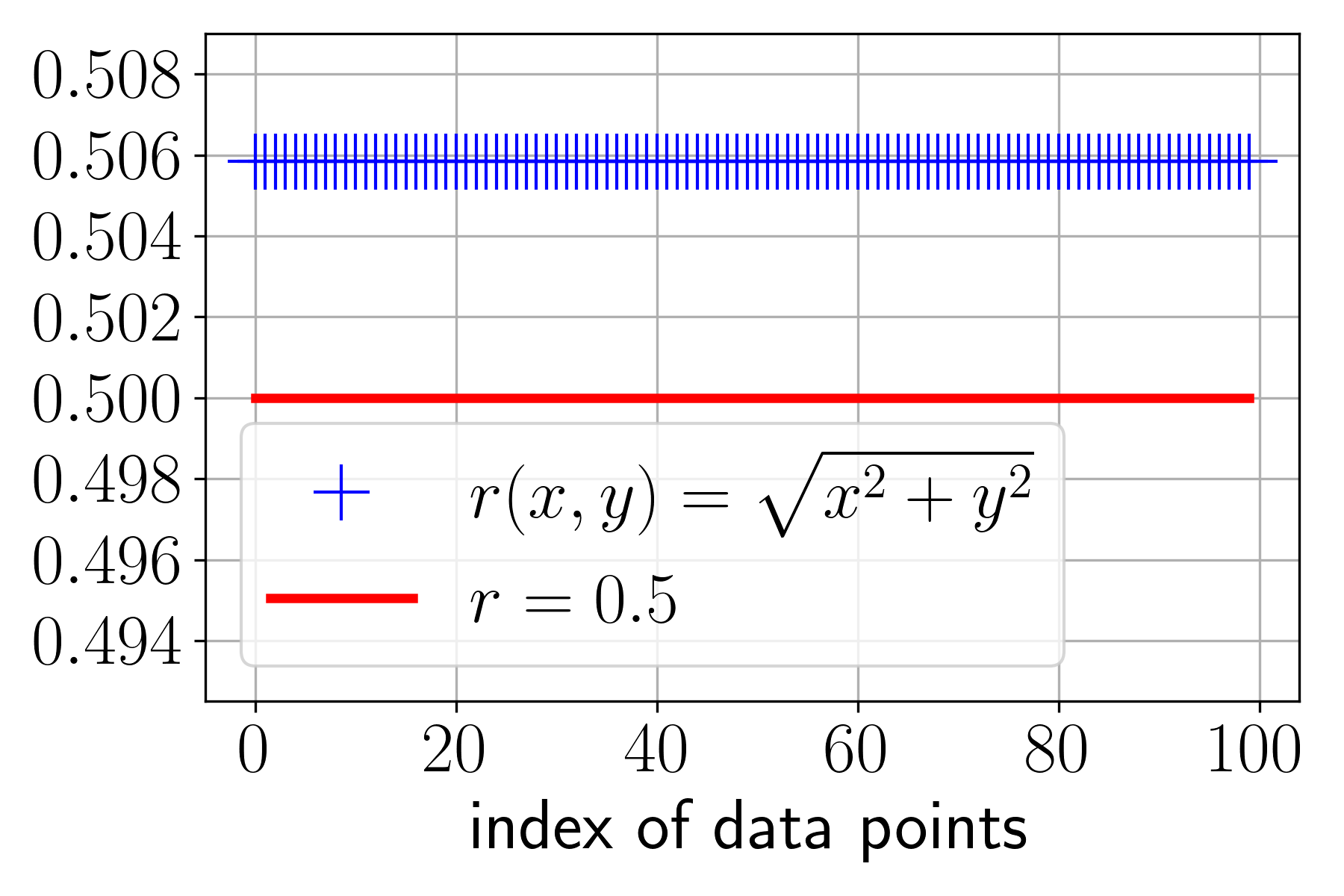}
        \caption{Radius function of singularity for Figure (c)}
\label{fig:Ex3-radius-X}
    \end{subfigure}
    ~
    \begin{subfigure}[t]{0.235\textwidth}
    \includegraphics[width=\textwidth]{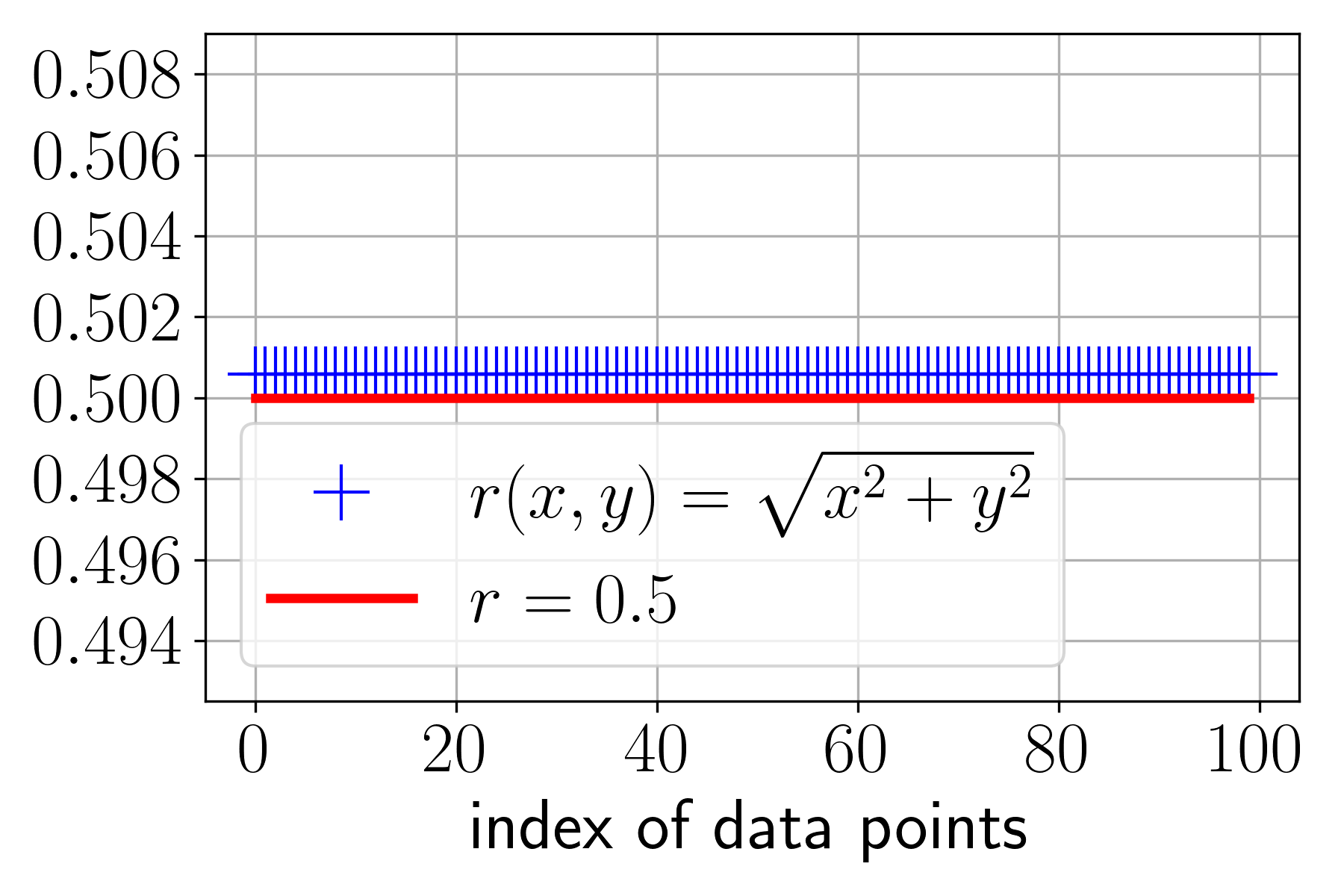}
        \caption{Radius function of singularity for Figure (d)}
    \label{fig:Ex3-radius-Xkde}
    \end{subfigure}
    \caption{Example 2. Top row: training data (dots) in each algorithm to compute the detected singularity set (curve), where (b) and (d) show the filtered data for (a) and (c), respectively. 
    Bottom row: plots of the radius function $r(x,y)$ over detected singularity curves from (a) to (d). Namely, $r(x,y)$ denotes the distance from a point $(x,y)$ on the singularity curve to the origin, where the true singularity satisfies $r(x,y)=0.5$ (solid line), and the detected singularity radius (`+' symbol) is graphed by sampling 100 points from the detected curve. The horizontal axis corresponds to the index of the 100 samples.}
    \label{fig:Ex3radius}
    \end{figure}

\subsection{Example 3. Different types of singularity curves}
In this experiment, we demonstrate the performance of the proposed self-supervised singularity detection Algorithm \ref{alg:alg2} on datasets corresponding to different types of singularity curves.
Results from both KDE-based filtering and $k$NN-based filtering will be shown.

\subsubsection{Boundary layer}
\label{subsec:Lshape}
    We consider the reaction-diffusion equation in \eqref{eq:reaction-difussion} with the exact solution given by
    $$u(x,y) = e^{-(x+1)/\sqrt{\varepsilon}}+ e^{-(y+1)/\sqrt{\varepsilon}},\quad (x,y)\in\Omega=(-1,1)^2.$$
    As shown in Figure \ref{fig:L-shape-true}, $u(x,y)$ exhibits a boundary singular layer with sharp gradients along the two sides of the boundary: $x=-1$ and $y = -1$.
    The raw mesh data (Type I), containing 796 nodes after 29 levels of refinements, is generated by AMR using the robust hybrid estimator from \cite{confusion}. Figure \ref{fig:L-shape} shows the singularity computed by (a) Algorithm \ref{alg:alg1}, (b) KDE-based Algorithm \ref{alg:alg2}, (c) $k$NN-based Algorithm \ref{alg:alg2}, and (d) the true singularity.
    The KDE-filtered subset (691 nodes) is shown in Figure \ref{fig:L-shape-kde}, where the threshold parameter is $\gamma=0.6$. The $k$NN-filtered subset (187 nodes) is shown in Figure \ref{fig:L-shape-knn} with parameters $k=5$ and $\gamma=0.6$.

    It is easy to see from Figure \ref{fig:L-shape-TypeI} that Algorithm \ref{alg:alg1} without filtering produces a highly inaccurate estimation, with the computed singularity curve deviating a lot from the true boundary layer near $(-1,-1)$, $(-1,1)$, $(1,-1)$.
    In contrast, the filtering-based Algorithm \ref{alg:alg2} captures the `L'-shaped boundary layer more accurately as shown in Figure \ref{fig:L-shape-kde} and Figure \ref{fig:L-shape-knn}.
    Table \ref{tab:L-shape-coef} shows the computed polynomial coefficients in $f(x,y)$ corresponding to the detected results from Figure \ref{fig:L-shape-TypeI} to Figure \ref{fig:L-shape-knn}, as compared to the polynomial below for the exact singularity.
    $$F_*(x,y)=0.5+0.5y+0.5x+0.5xy.$$
    We also observe from Table \ref{tab:L-shape-coef} that more concentrated training data obtained from $k$NN-based filtering in Figure \ref{fig:L-shape-knn} yield more accurate singularity detection than KDE-based filtering in Figure \ref{fig:L-shape-kde},
    while the baseline detection Algorithm \ref{alg:alg1} gives much less accurate results compared to the filtering-based Algorithm \ref{alg:alg2} (with KDE or $k$NN).

    \begin{table}[htbp]
    \centering
    \begin{tabular}{c|c|c|c|c|c|c}
    \hline
       Basis  & $1$ & $y$ & $y^2$ & $x$ & $xy$ & $x^2$\\
    \hline
        Coefficients (Figure \ref{fig:L-shape-TypeI}) & 0.583 & 0.451 & -0.116 & 0.450 & 0.476 & -0.118 \\
        Coefficients (Figure \ref{fig:L-shape-kde}) & 0.480 & 0.504 & 0.011 & 0.504 & 0.511 & 0.011\\
        Coefficients (Figure \ref{fig:L-shape-knn}) & 0.496 & 0.500 & -6.4E-4 & 0.500 & 0.504 & -6.5E-4\\
        Coefficients (exact) & 0.5 & 0.5 & 0 & 0.5 & 0.5 & 0\\
    \hline
    \end{tabular}
    \caption{Boundary singular layer: comparison of polynomial coefficients for curves in Figure \ref{fig:L-shape}}
    \label{tab:L-shape-coef}
\end{table}

    \begin{figure}[htbp]
    \centering    
    \begin{subfigure}[t]{0.24\textwidth}  \includegraphics[width=\textwidth]{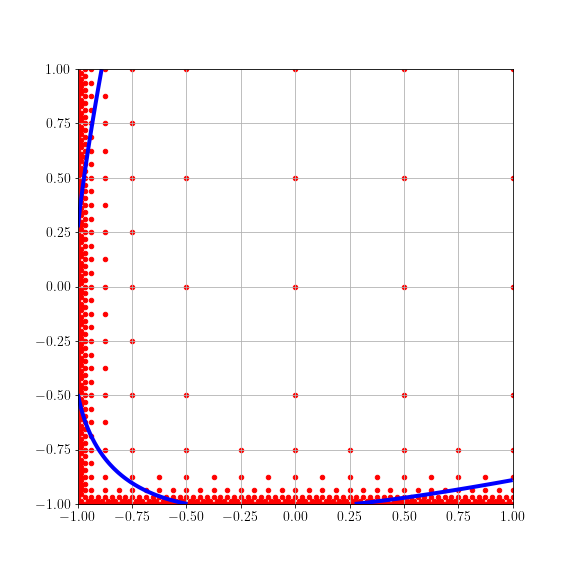}
    \caption{Algorithm~\ref{alg:alg1} (no filtering)}
    \label{fig:L-shape-TypeI}
    \end{subfigure}
    \begin{subfigure}[t]{0.24\textwidth}  \includegraphics[width=\textwidth]{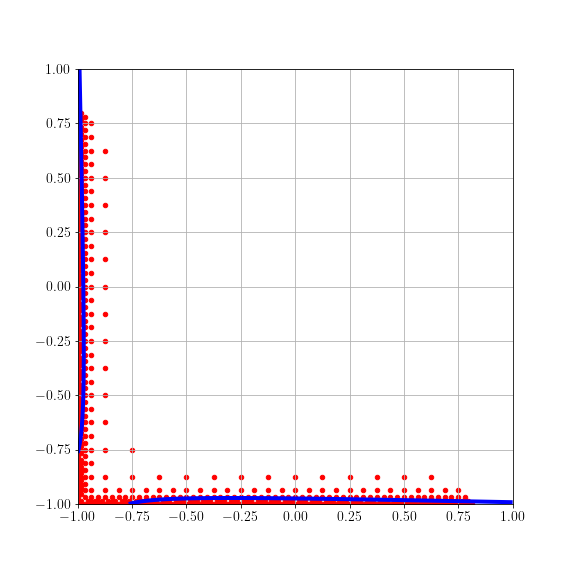}
    \caption{Algorithm \ref{alg:kde} (KDE filtering with $\gamma=0.6$)}
    \label{fig:L-shape-kde}
    \end{subfigure}
    \begin{subfigure}[t]{0.24\textwidth}  \includegraphics[width=\textwidth]{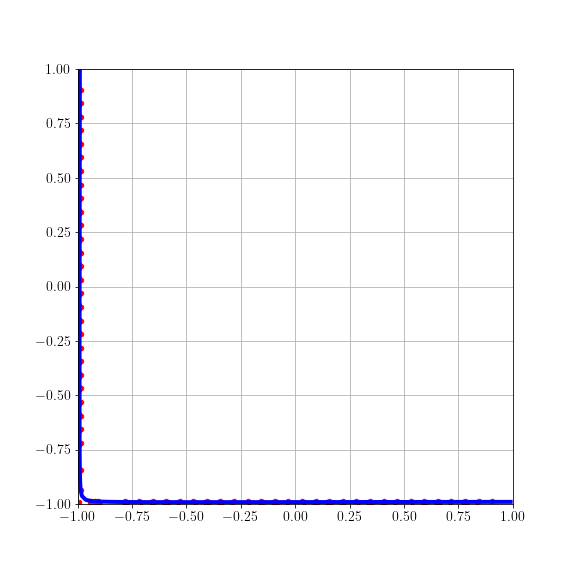}
    \caption{Algorithm \ref{alg:knn} ($k$NN-filtering with $\gamma=0.6$, $k=5$)}
    \label{fig:L-shape-knn}
    \end{subfigure}
      \begin{subfigure}[t]{0.24\textwidth}  \includegraphics[width=\textwidth]{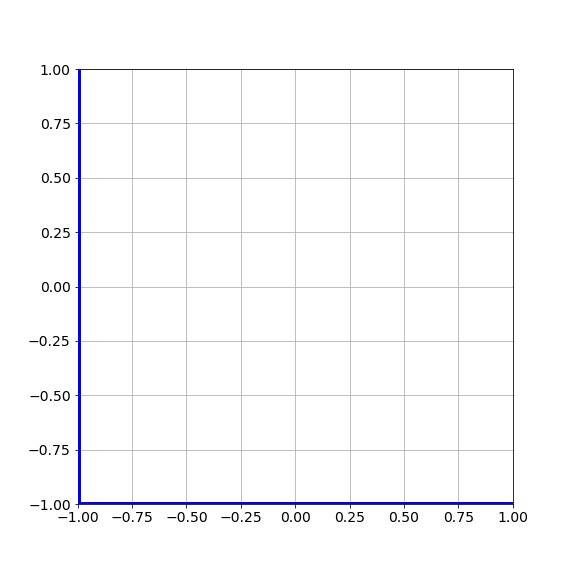}
    \caption{Exact singularity}
    \label{fig:L-shape-true}
    \end{subfigure}
    \caption{Example 3. Boundary singular layer}
    \label{fig:L-shape}
\end{figure}

\subsubsection{`X'-shaped singularity}
\label{ssec:X-shape}
    We consider an `X'-shaped singularity curve in $\Omega=(-1,1)^2$ from \cite{tang2003adaptive}.
    The singularity 
    $$\mathcal{S}=\{(x,y)\in\Omega: x^2-y^2=0\}.$$ 
    is associated with the piecewise constant function
    $$u(x,y) = \left\{
    \begin{array}{ll}
    1 & \text{ if } |x|\leq |y|,\\
    0 & \text{ otherwise.}
    \end{array}
    \right.$$
    The mesh is generated by Netgen/NGSolve \cite{schoberl1997netgen,ngsolve} and the training data (Type I) contains $1915$ nodes (shown in Figure \ref{fig:X-shape-raw}).
    Figure \ref{fig:X-shape} shows the learned singularity by (a) Algorithm \ref{alg:alg1}, (b) KDE-based Algorithm \ref{alg:alg2}, (c) $k$NN-based Algorithm \ref{alg:alg2}, as well as (d) the true singularity $\mathcal{S}$.
    The KDE filtering uses $\gamma=0.4$ and the filtered subset is shown in Figure \ref{fig:X-shape-kde} with $1725$ nodes.
    The $k$NN filtering uses $k=5$, $\gamma=0.4$, and the filtered subset is shown in Figure \ref{fig:X-shape-knn} with $937$ nodes.
    We see from Figure \ref{fig:X-shape} that filtering removes ``irrelevant'' points in the raw training data and helps to improve the accuracy of singularity detection. 
    Specifically, we compare in Table \ref{tab:X-shape-coef} the learned coefficients in $f(x,y)$ by the above algorithms to the true polynomial 
    $$F_*(x,y)=-\frac{1}{\sqrt{2}}y^2+\frac{1}{\sqrt{2}}x^2\approx -0.707y^2+0.707x^2$$ 
    corresponding to the exact singularity $\mathcal{S}=\{(x,y):x^2=y^2\}$.
    It can be seen from Table \ref{tab:X-shape-coef} that the $k$NN-based approach, with more concentrated training nodes after filtering, produces slightly better estimation than the KDE-based approach. Meanwhile, the baseline Algorithm \ref{alg:alg1} yields the least accurate approximation.

\begin{table}[htbp]
    \centering
    \begin{tabular}{c|c|c|c|c|c|c}
    \hline
       Basis  & $1$ & $y$ & $y^2$ & $x$ & $xy$ & $x^2$\\
    \hline
        Coefficients (Figure \ref{fig:X-shape-raw}) & 1.3E-2 & -1.1E-4 & -0.687 & -1.2E-4 & -2.1E-4 & 0.726\\
        Coefficients (Figure \ref{fig:X-shape-kde}) & 1.0E-2 & -3.9E-7 & -0.687 & -3.5E-7 & -3.3E-7 & 0.727\\
        Coefficients (Figure \ref{fig:X-shape-knn}) & 6.5E-3 & -2.9E-7 & -0.693 & -3.0E-7 & -4.8E-7 & 0.721\\
        Coefficients (exact) & 0 & 0 & -0.707 & 0 & 0 & 0.707\\
    \hline
    \end{tabular}
    \caption{`X'-shaped singularity: comparison of polynomial coefficients for curves in Figure \ref{fig:X-shape}}
    \label{tab:X-shape-coef}
\end{table}

    \begin{figure}
    \begin{subfigure}[t]{0.23\textwidth}
        \includegraphics[width=\textwidth]{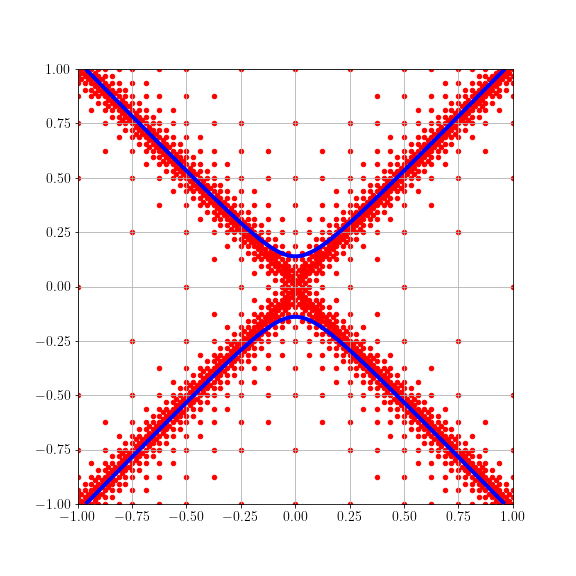}
        \caption{Algorithm \ref{alg:alg1}}
        \label{fig:X-shape-raw}
    \end{subfigure}
    ~
    \begin{subfigure}[t]{0.23\textwidth}
    \includegraphics[width=\textwidth]{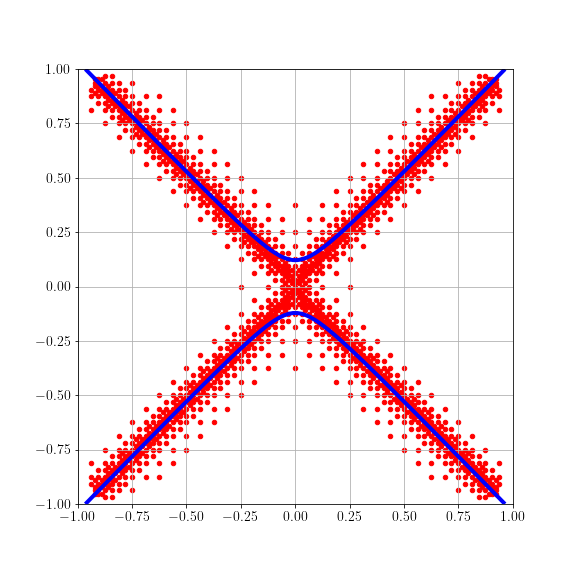}
        \caption{KDE-based Algorithm \ref{alg:alg2}}
        \label{fig:X-shape-kde}
    \end{subfigure}
    ~
    \begin{subfigure}[t]{0.23\textwidth}
        \includegraphics[width=\textwidth]{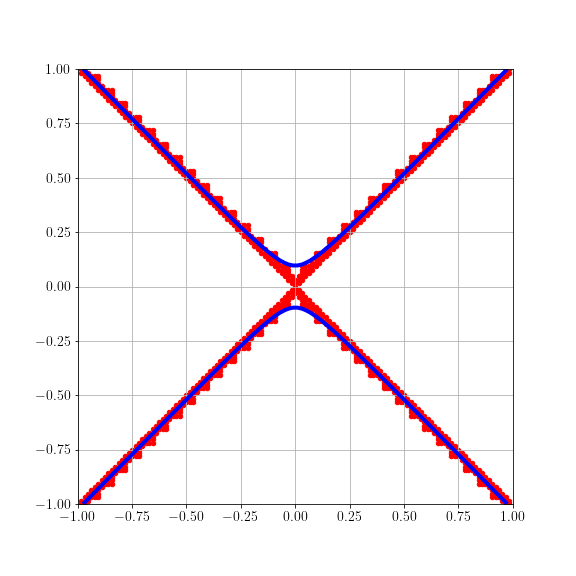}
        \caption{$k$NN-based Algorithm \ref{alg:alg2}}
        \label{fig:X-shape-knn}
    \end{subfigure}
    ~
    \begin{subfigure}[t]{0.23\textwidth}
\includegraphics[width=\textwidth]{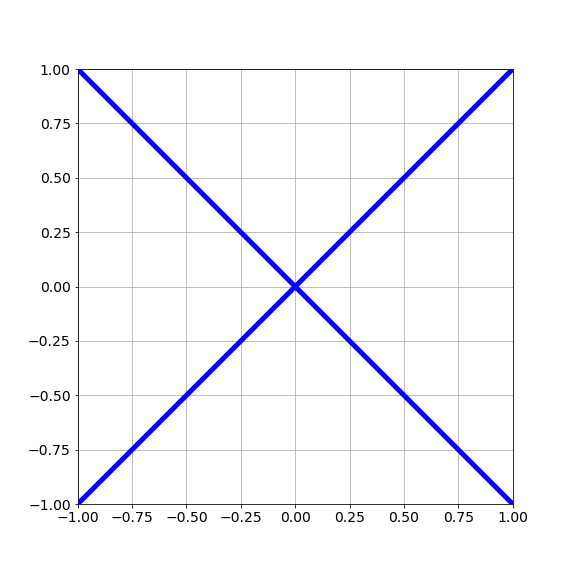}
        \caption{Exact singularity}
        \label{fig:Xshape-exact}
    \end{subfigure}
    \caption{Example 3. `X'-shaped singularity}
    \label{fig:X-shape}
    \end{figure}

\subsubsection{Concentric semicircles}
\label{subsec:vortical}
In this example, we consider the following singularly perturbed advection-diffusion problem in the domain $\Omega=(0,1)\times(-1,1)$ with a rotational advection field $\beta(x,y)=(-y,x)$, modeling a rotating flow. 
    \begin{align*}
    -\varepsilon \Delta u + \beta(x,y)\cdot \nabla u  & = 0 \;\; \text{ in } \Omega \quad \text{ with } \varepsilon = 10^{-4},\\
    u & = g_d \text{ on } \partial \Omega.
    \end{align*}
   The exact solution is chosen as
\begin{equation}
u(x,y)=\frac{1}{2}\tanh(\varepsilon^{-1} (x^2+y^2-0.5^2))-\frac{1}{2}\tanh(\varepsilon^{-1} (x^2+y^2-0.75^2)).
\end{equation}
    The solution $u(x,y)$ displays two singular layers in the form of two semicircles $x^2+y^2=0.5^2$ and $x^2+y^2=0.75^2$ in the domain $\Omega$.
    We generate two datasets (both Type I) using the Netgen/NGSolve software \cite{ngsolve}.
    The larger dataset $X_1$ contains 13253 nodes, and the smaller dataset $X_2$ contains 2685 nodes. 
    For each dataset, we apply the following three methods and compare their performance (a) Algorithm \ref{alg:alg1}, (b) KDE-based Algorithm \ref{alg:alg2}, (c) $k$NN-based Algorithm \ref{alg:alg2}.
    The results for $X_1$ and $X_2$ are shown in Figure \ref{fig:semicircle1} and Figure \ref{fig:semicircle2}, respectively.
    
    For $X_1$, it can be seen that from Figure \ref{fig:X1-rings-TypeI} that, even though $X_1$ is a relatively large-scale dataset, Algorithm \ref{alg:alg1} without filtering fails to capture the singularity accurately. In contrast, filtering-based Algorithm \ref{alg:alg2} yields much better results as shown in Figure \ref{fig:X1-rings-kde} and Figure \ref{fig:X1-rings-knn}.
    
    For $X_2$, we see from Figure \ref{fig:X2-rings-TypeI} that Algorithm \ref{alg:alg1} gives large errors in learning the singularity curve, mainly due to the small size of the training data. The KDE-based Algorithm \ref{alg:alg2} in Figure \ref{fig:X2-rings-kde} yields better estimation compared to Algorithm \ref{alg:alg1}, while the $k$NN-based Algorithm in Figure \ref{fig:X2-rings-knn} yields the best performance for detecting singularity.
    It should also be noticed that the $k$NN filtering used in each experiment yields the most concentrated nodes (with the smallest size of data). For example, the $k$NN-filtered subset in Figure \ref{fig:X1-rings-knn} contains only 3736 nodes, much smaller than the raw data in Figure \ref{fig:X1-rings-TypeI} with 13253 nodes and the KDE-filtered subset in Figure \ref{fig:X1-rings-kde} with 12856 nodes. We see that the local nature of $k$NN is more effective than KDE in reducing ``noise'' in the raw dataset.

    The singularity set $\mathcal{S}$ contains the roots of the quartic polynomial $(x^2+y^2-0.5^2)(x^2+y^2-0.75^2)$ inside $\Omega$. 
    The normalized polynomial such that the coefficient vector has unit norm is (coefficients truncated to 5 digits of accuracy)
$$F_*(x,y) \approx 0.05191-0.29990y^2+0.36910y^4-0.29990x^2+0.73821x^2y^2+ 0.36910x^4.$$
    Polynomial coefficients calculated by the algorithms listed in Figure \ref{fig:semicircle1} and Figure \ref{fig:semicircle2} are shown in Table \ref{tab:semicircle1} and Table \ref{tab:semicircle2}, respectively. Insignificant terms such as $x,y,xy,x^2y,xy^2,x^3,y^3,x^3y,xy^3$ are not shown, since the computed coefficients are all small (ranging from $10^{-7}$ to $10^{-3}$). It can be seen from Table \ref{tab:semicircle1} and Table \ref{tab:semicircle2} that the $k$NN-based Algorithm \ref{alg:alg2} yields the best performance compared to Algorithm \ref{alg:alg1} and KDE-based Algorithm \ref{alg:alg2}. For small-scale data $X_2$, we see from Table \ref{tab:semicircle2} that the baseline Algorithm \ref{alg:alg1} without filtering leads to completely wrong detection of the singularity (see also Figure \ref{fig:X1-rings-TypeI}). The filtering-based Algorithm \ref{alg:alg2}, regardless of the choice of the filtering method, gives a lot more reliable estimation of the singularity according to Table \ref{tab:semicircle2}.
    
\begin{table}[htbp]
    \centering
    \begin{tabular}{c|c|c|c|c|c|c}
    \hline
       Basis  & $1$ & $y^2$ & $y^4$ & $x^2$ & $x^2y^2$ & $x^4$\\
    \hline
        Coefficients (Figure \ref{fig:X1-rings-TypeI}) & 0.07438 & -0.34730 & 0.34807 & -0.34729 & 0.71481 & 0.34805\\
        Coefficients (Figure \ref{fig:X1-rings-kde}) & 0.05285 & -0.30115 & 0.36448 & -0.30113 & 0.74173 & 0.36445\\
        Coefficients (Figure \ref{fig:X1-rings-knn}) & 0.05184 & -0.29949 & 0.36844 & -0.29949 & 0.73920 & 0.36845\\
        Coefficients (exact) & 0.05191 & -0.29990 & 0.36910 & -0.29990 & 0.73821 & 0.36910\\
    \hline
    \end{tabular}
    \caption{Concentric-semicircle singularity: comparison of polynomial coefficients for curves in Figure \ref{fig:semicircle1} (large-scale data $X_1$)}
    \label{tab:semicircle1}
\end{table}

\begin{table}[htbp]
    \centering
    \begin{tabular}{c|c|c|c|c|c|c}
    \hline
       Basis  & $1$ & $y^2$ & $y^4$ & $x^2$ & $x^2y^2$ & $x^4$\\
    \hline
        Coefficients (Figure \ref{fig:X2-rings-TypeI}) & -0.11603 & 0.45933 & -0.41357 & 0.46038 & -0.46902 & -0.41543\\
        Coefficients (Figure \ref{fig:X2-rings-kde}) & 0.05923 & -0.30304 & 0.30721 & -0.30479 & 0.78795 & 0.31061\\
        Coefficients (Figure \ref{fig:X2-rings-knn}) & 0.05453 & -0.30345 & 0.35775 & -0.30359 & 0.74603 & 0.35809\\
        Coefficients (exact) & 0.05191 & -0.29990 & 0.36910 & -0.29990 & 0.73821 & 0.36910\\
    \hline
    \end{tabular}
    \caption{Concentric-semicircle singularity: comparison of polynomial coefficients for curves in Figure \ref{fig:semicircle2} (small-scale data $X_2$)}
    \label{tab:semicircle2}
\end{table}

\begin{figure}[h]
\begin{subfigure}[t]{0.32\textwidth}
\includegraphics[width=\textwidth]{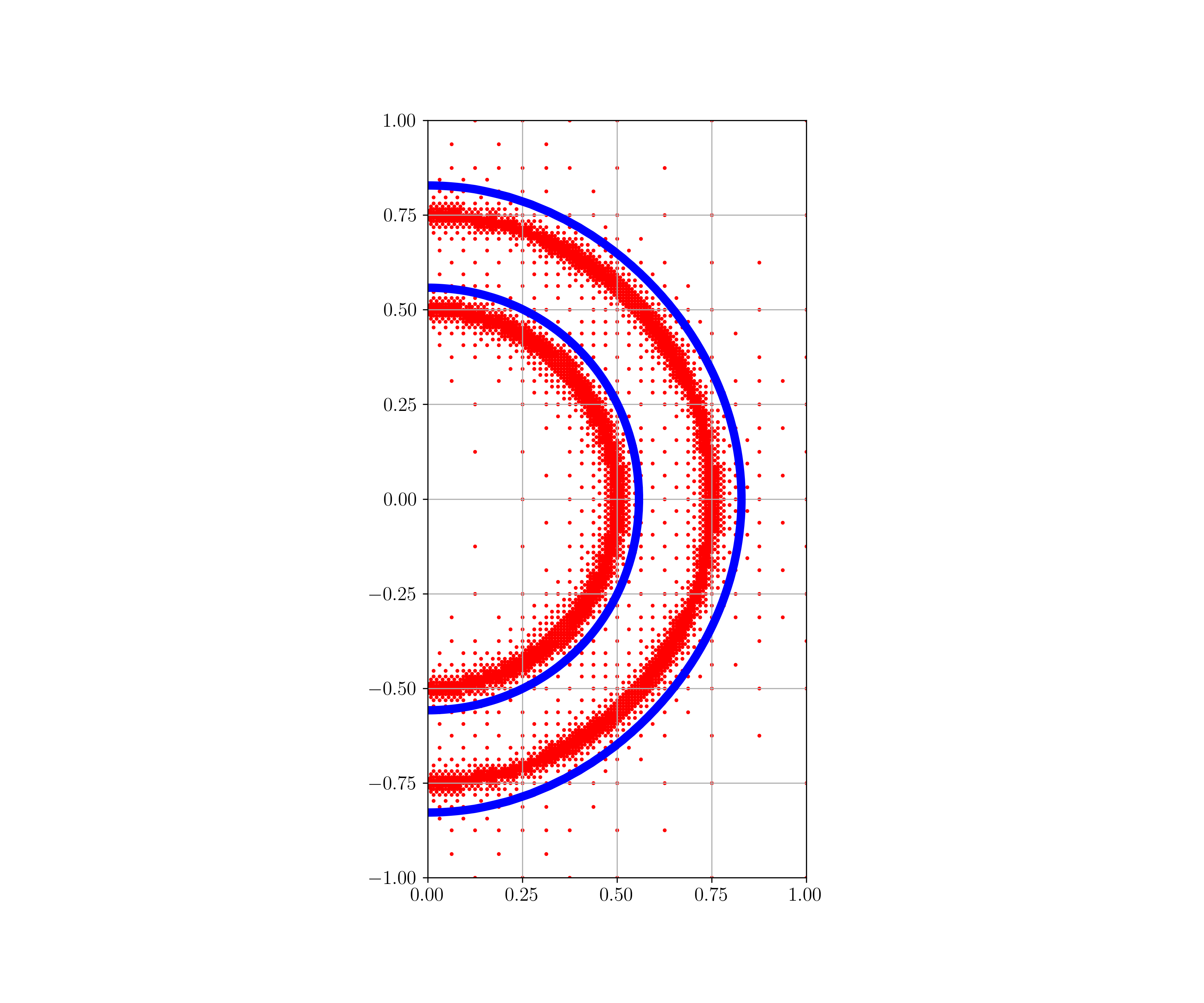}
\caption{Algorithm \ref{alg:alg1} for $X_1$}
\label{fig:X1-rings-TypeI}
\end{subfigure}
~
\begin{subfigure}[t]{0.32\textwidth}
\includegraphics[width=\textwidth]{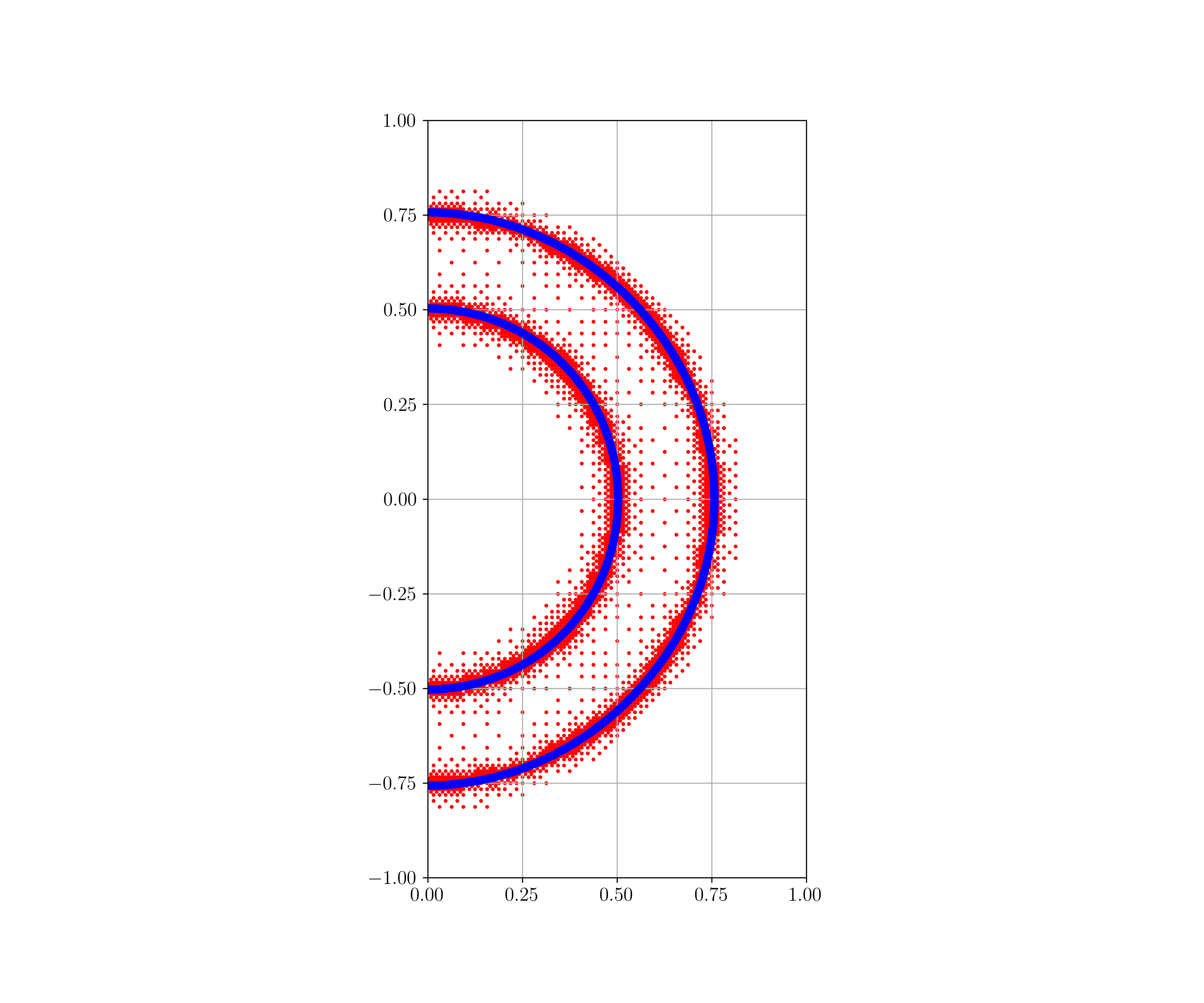}
\caption{KDE-based Algorithm \ref{alg:alg2} ($\gamma=0.6$)}
\label{fig:X1-rings-kde}
\end{subfigure}
~
\begin{subfigure}[t]{0.32\textwidth}
\includegraphics[width=\textwidth]{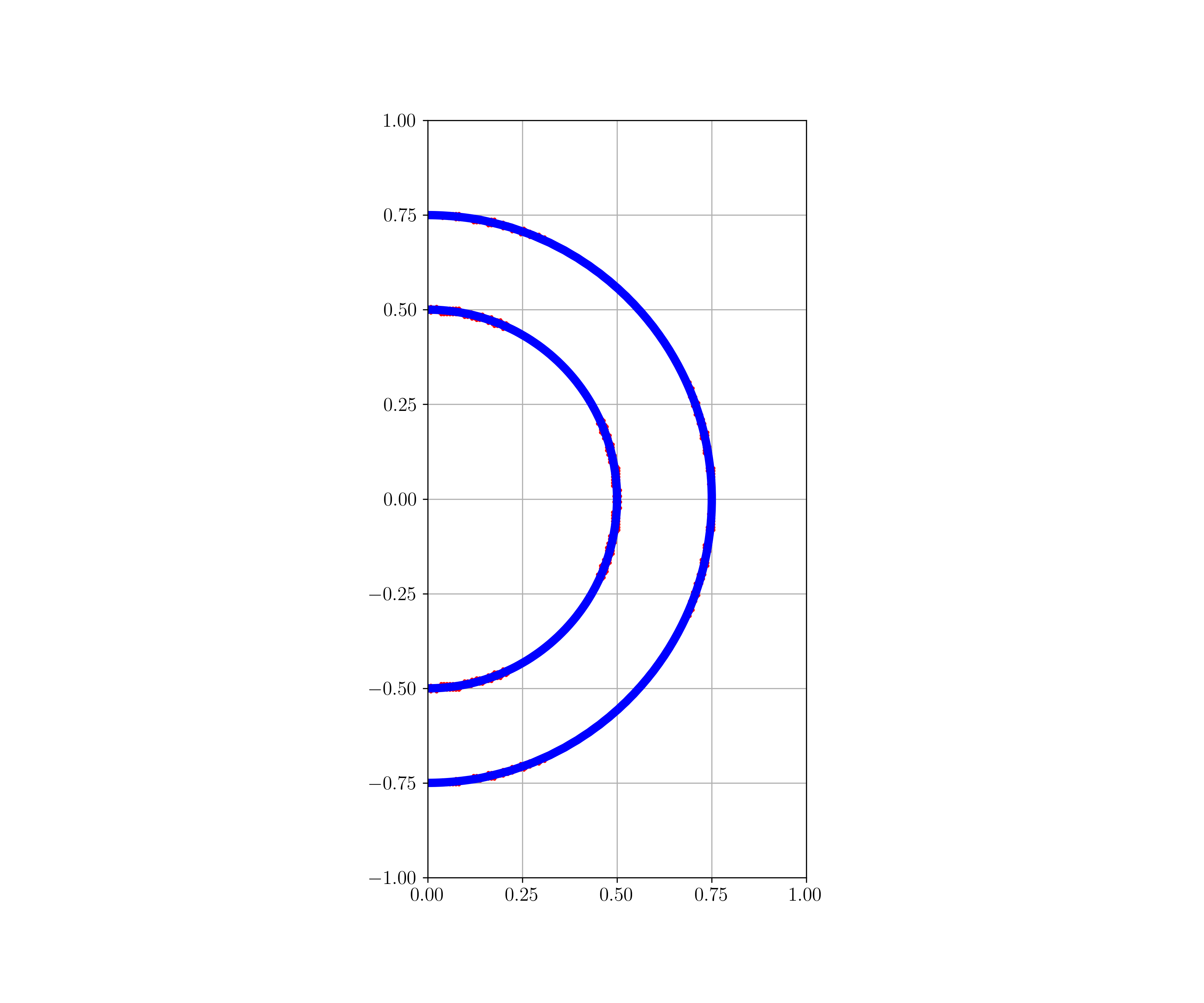}
\caption{$k$NN-based Algorithm \ref{alg:alg2} ($k=5$, $\gamma=0.6$)}
\label{fig:X1-rings-knn}
\end{subfigure}
\caption{Example 3. Concentric-semicircle singularity: large-scale raw data $X_1$}
\label{fig:semicircle1}
\end{figure}

\begin{figure}[h]
\begin{subfigure}[t]{0.32\textwidth}
\includegraphics[width=\textwidth]{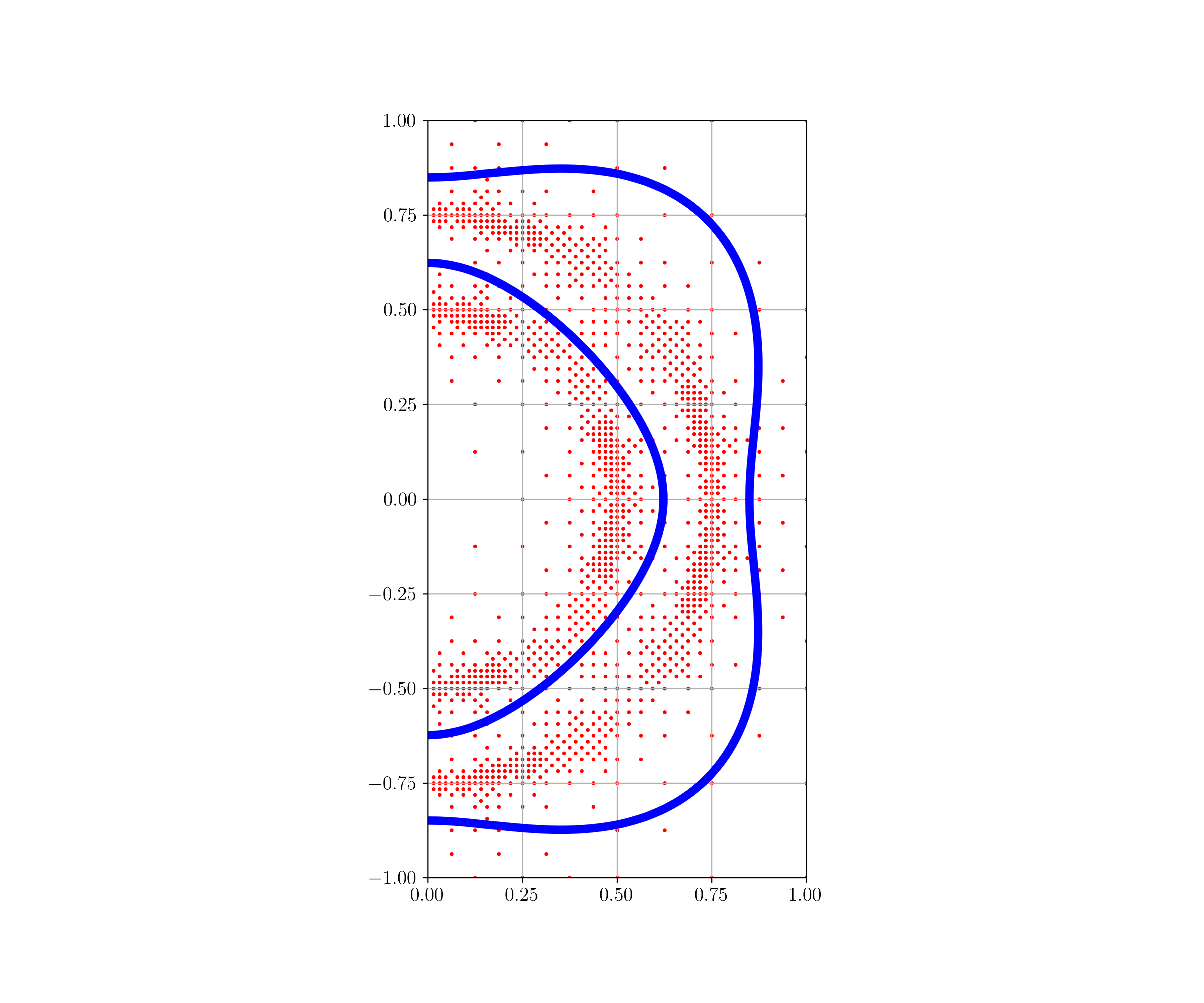}
\caption{Algorithm \ref{alg:alg1} for $X_2$}
\label{fig:X2-rings-TypeI}
\end{subfigure}
~
\begin{subfigure}[t]{0.32\textwidth}
\includegraphics[width=\textwidth]{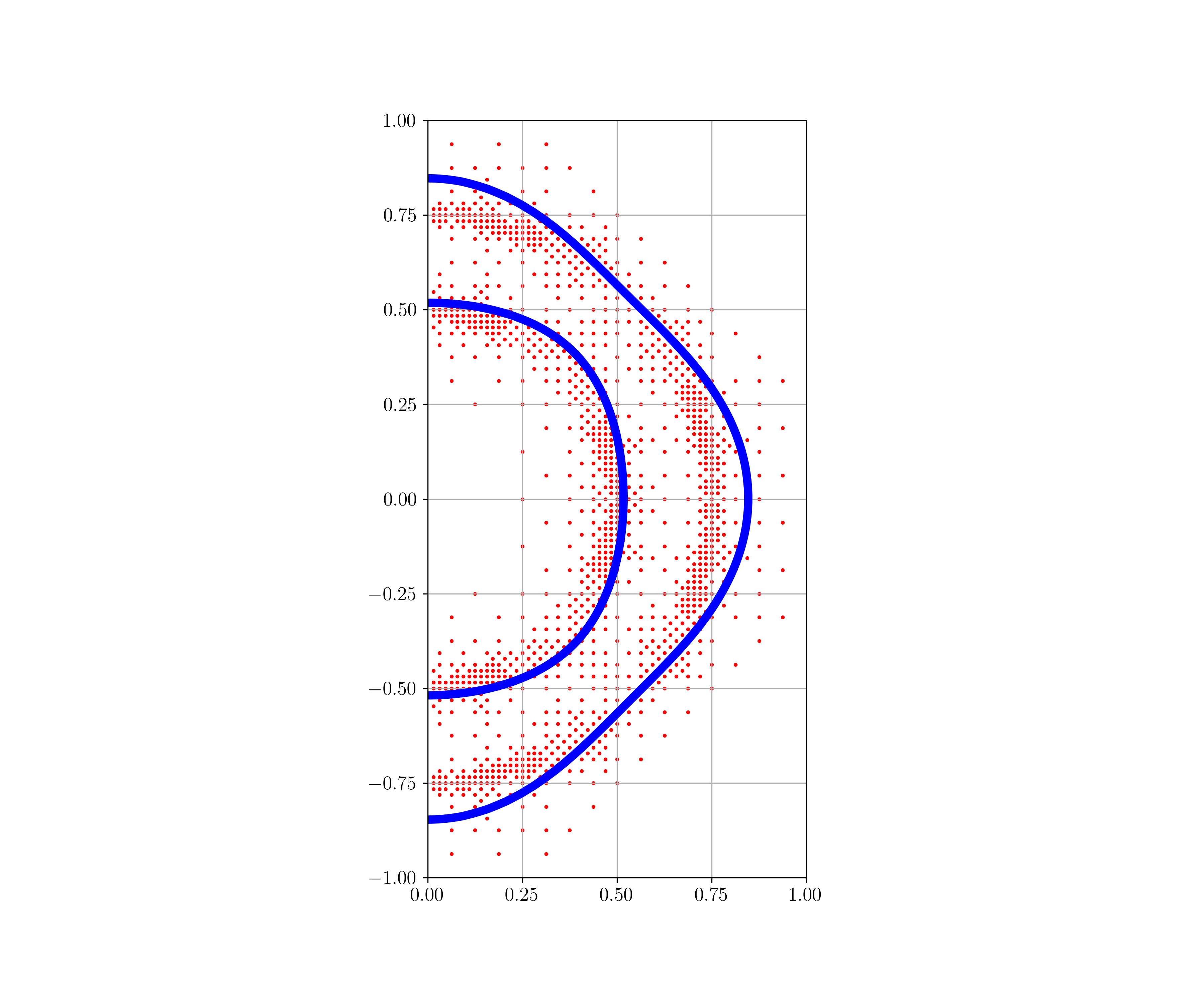}
\caption{KDE-based Algorithm \ref{alg:alg2} ($\gamma=0.2$)}
\label{fig:X2-rings-kde}
\end{subfigure}
~
\begin{subfigure}[t]{0.32\textwidth}
\includegraphics[width=\textwidth]{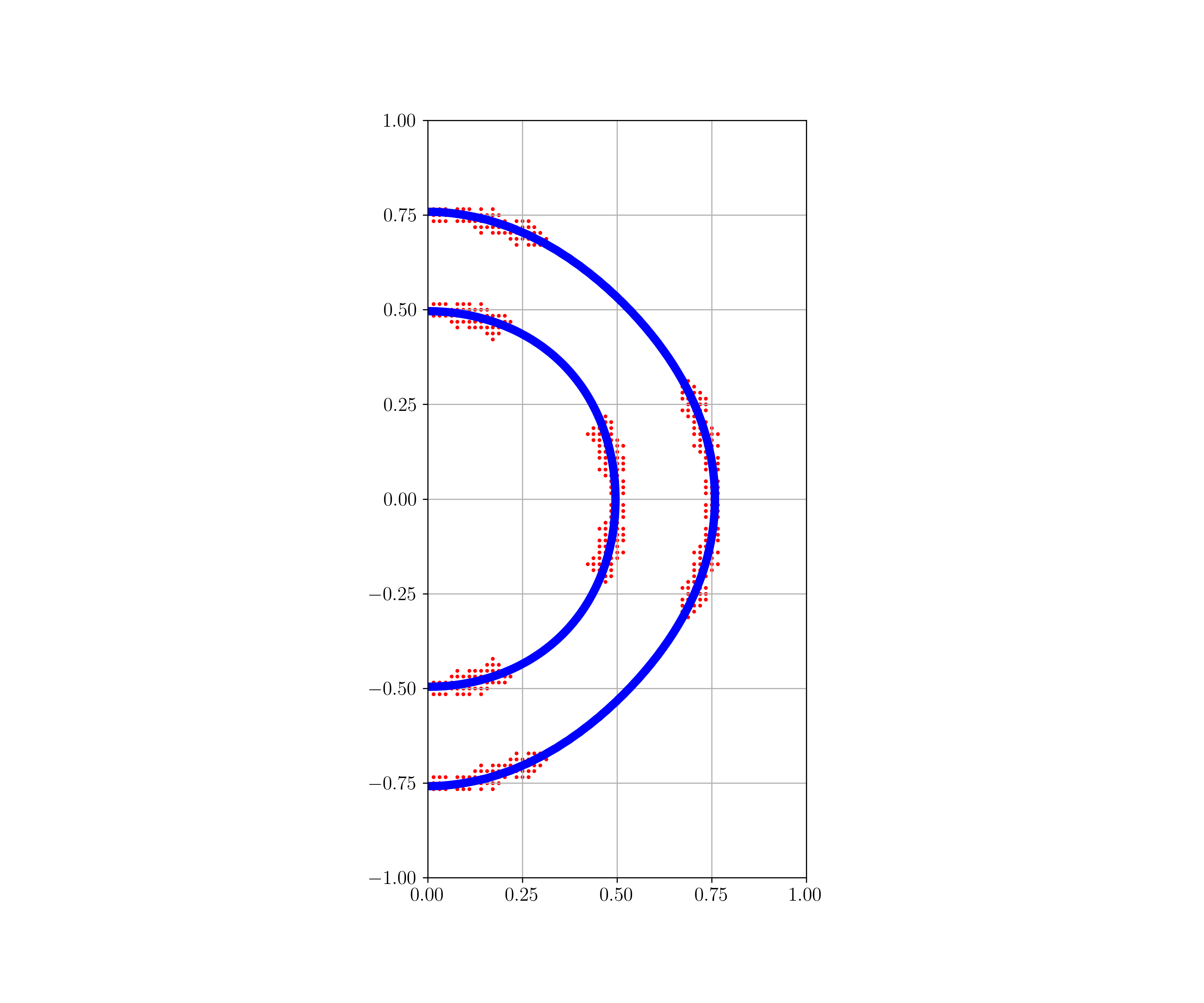}
\caption{$k$NN-based Algorithm \ref{alg:alg2} ($k=5$, $\gamma=0.6$)}
\label{fig:X2-rings-knn}
\end{subfigure}
\caption{Example 3. Concentric-semicircle singularity: small-scale raw data $X_2$}
\label{fig:semicircle2}
\end{figure}

\subsection{Example 4. Fourier basis for singularity detection}
In this example, we consider using the Fourier basis for the detection function \eqref{eq:f} to perform singularity detection. To this end, we represent training points in polar coordinates $(r,\theta)$ such that $x=r\cos\theta$ and $y=r\sin\theta$.
The detection function $f(r, \theta)$ in polar coordinates is chosen as
\begin{equation}
\label{eq:fpolar}
    f(r, \theta) =  \sum_{j=0}^{J} \sum_{m=0}^{M} a_{j,m} \, r^j \cos(m\theta) + b_{j,m}\, r^j \sin(m\theta),
\end{equation}
where $J,M$ are nonnegative integers and $n=m=0$ corresponds to the constant term $a_{0,0}$.
The goal is to determine the vector $\mathbf{c} := \{a_{0,0}, b_{0,0},\dots,a_{J,M},b_{J,M}\}$ containing all the unknown coefficients.
The loss function remains the same as in Theorem \ref{thm:MLE} except a change of the coordinate representation for each training point. The optimization problem (quadratic programming) can be solved using SLSQP as for the Cartesian case.
To evaluate the empirical performance, we consider the two datasets that have caused some difficulties for the polynomial detection function in previous examples: `X' shape and concentric semicircles.
We apply Algorithm \ref{alg:alg1} (no filtering) with the Fourier detection function in \eqref{eq:fpolar} to the raw data.

The result is shown in Figure \ref{fig:fpolar}.
We observe from Figure \ref{fig:Fourier-X-shape-raw} that the Fourier detection function in \eqref{eq:fpolar} produces an accurate estimation of the singularity for the `X' shape data, even better than all the algorithms (without or with filtering) illustrated in Figure \ref{fig:X-shape}. 
This implies that the Fourier basis is better at detecting this type of singularity than the polynomial basis.
Similarly, compared to Figure \ref{fig:X1-rings-TypeI} with polynomial basis, it is easy to see that Figure \ref{fig:Fourier-X1-rings-TypeI} with Fourier basis demonstrates substantially more accurate singularity detection for concentric semicircles data, without applying filtering to the data.

In Table \ref{tab:polar-X} and Table \ref{tab:polar-semicircles}, we compare the computed detection function to the true singularity function (with normalized coefficients) for each case.
\begin{itemize}
    \itemsep=0in
    \item For `X' shape, the singularity set $\mathcal{S}$ is given by 
$\mathcal{S}=\{(r,\theta):\theta=\pm\frac{\pi}{4}\},$
which is the root set of 
\begin{equation}
    \label{eq:F*polar-X}
    F_*(r,\theta)=r\cos(2\theta).
\end{equation}
    \item For concentric semicircles, $\mathcal{S}$ is the root set of $(r-0.5)(r-0.75)$, whose normalized version is 
    \begin{equation}
    \label{eq:F*polar-semicircles}
        F_*(r,\theta)\approx 0.608r^2-0.760r+0.228.
    \end{equation}
\end{itemize}
For Table \ref{tab:polar-semicircles}, the terms $$r\cos\theta,r\sin\theta,r\cos(2\theta),r\sin(2\theta),r^2\cos\theta,r^2\sin\theta,r^2\cos(2\theta),r^2\sin(2\theta)$$ are not shown since the error is too small (below 1E-5).
In parallel to the observation above, Table \ref{tab:polar-X} and Table \ref{tab:polar-semicircles} again demonstrate that the estimation accuracy by the Fourier basis is better than the polynomial basis for the case of `X' shape or concentric semicircles singularity in Example 3, both using the baseline Algorithm \ref{alg:alg1}.
From this example, we see that the proposed formulation provides sufficient flexibility to adjust the detection model in order to effectively handle different types of singularities.

\begin{figure}[htbp]
\begin{subfigure}[t]{0.44\textwidth}
\includegraphics[width=.8\textwidth]{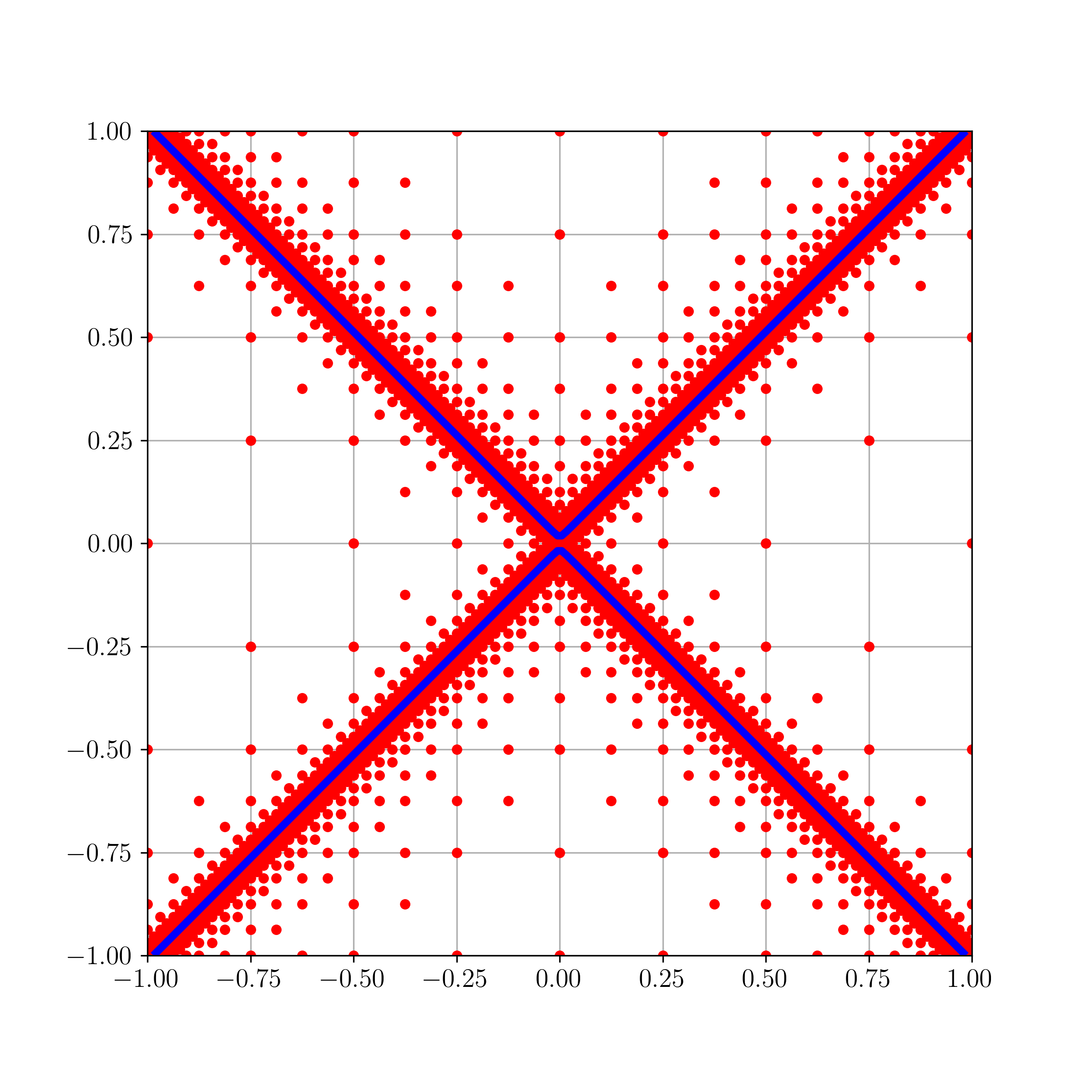}
\caption{Fourier basis \eqref{eq:fpolar} with $J=1,M=2$ gives better result than the polynomial basis in Section \ref{ssec:X-shape}}
\label{fig:Fourier-X-shape-raw}
\end{subfigure}
~
\begin{subfigure}[t]{0.44\textwidth}
    \includegraphics[width=.8\textwidth]{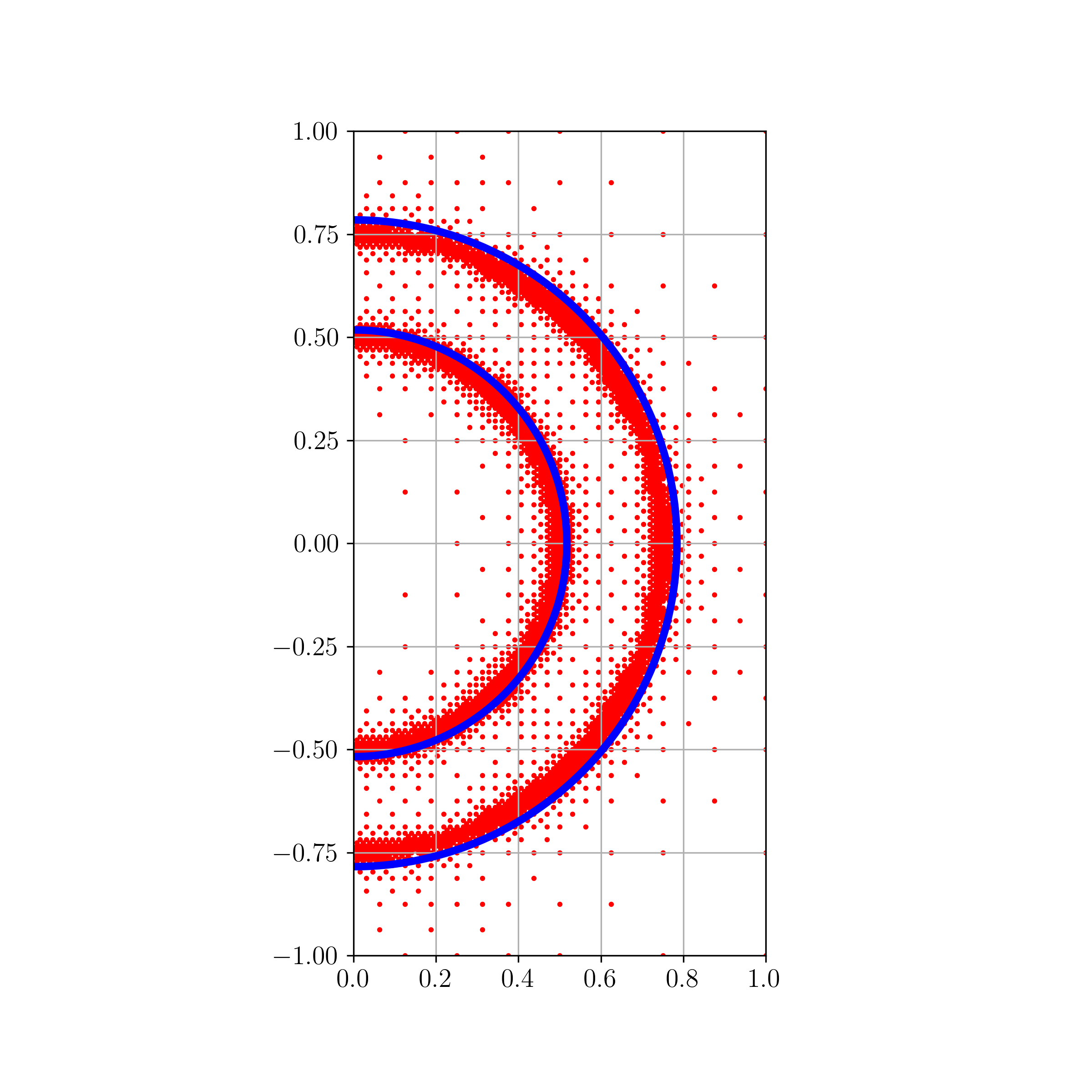}
    \caption{Fourier basis \eqref{eq:fpolar} with $J=M=2$ gives better result than the polynomial basis in Section \ref{subsec:vortical}}
    \label{fig:Fourier-X1-rings-TypeI}
\end{subfigure}
\caption{Singularity detection using Algorithm~\ref{alg:alg1} (no filtering) with Fourier basis}
\label{fig:fpolar}
\end{figure}

\begin{table}[htbp]
    \centering
    \begin{tabular}{c|c|c|c|c|c|c}
    \hline
       Basis  & $1$ & $r$ & $r\cos(\theta)$ & $r\sin(\theta)$ & $r\cos(2\theta)$ & $r\sin(2\theta)$   \\
    \hline
        Coefficients (Figure \ref{fig:Fourier-X-shape-raw}) & 0.0162 &  0.0030  & -5E-6 & -5E-6 & 0.9999 & -3E-6  \\
        Coefficients (exact)
          & 0 & 0 & 0 & 0 & 1 & 0\\
    \hline
    \end{tabular}
     \caption{`X'-shaped singularity:  comparison of the computed Fourier detection function and the exact $F_*$ in \eqref{eq:F*polar-X}}
     \label{tab:polar-X}
\end{table}

\begin{table}[htbp]
    \centering
    \begin{tabular}{c|c|c|c}
    \hline
       Basis  & $1$ & $r$ & $r^2$ \\
    \hline
        Coefficients (Figure \ref{fig:Fourier-X1-rings-TypeI}) & 0.240 &   -0.770  & 0.591\\
        Coefficients (exact) &0.228& -0.760 &  0.608 \\
    \hline
    \end{tabular}
    \caption{Concentric-semicircle singularity: comparison of the computed Fourier detection function and the exact $F_*$ in \eqref{eq:F*polar-semicircles}}
    \label{tab:polar-semicircles}
\end{table}

\section{Conclusion}
Resolving the potentially unknown singularity in the solution is critical for the efficient numerical discretization of PDEs.
In this paper, we considered detecting the unknown singularity from limited mesh data. We proposed a flexible self-supervised learning framework for effective singularity detection, with filtering as the pretext task.
Two filtering methods are investigated, kernel density estimation and $k$-nearest neighbors. 
Extensive experiments are presented to demonstrate the dramatically improved accuracy for estimating the unknown singularity compared to learning without filtering, especially for small-scale data.
Moreover, empirical results indicate that a smaller filtered subset with more concentrated nodes will yield more accurate singularity detection. Computationally, the filtering procedure will also help significantly reduce the training cost when the filtered subset is much smaller than the raw data. Though the examples presented are in two dimensions, extension to three dimensions is straightforward.
The promising results presented in this paper motivate a number of interesting directions to be explored in the future, including investigating the impact of hyperparameters (for example, the threshold parameter $\gamma$ in Algorithm \ref{alg:alg2}, the bandwidth parameter $h$ in KDE), studying more advanced ways to generate pseudo-labels, integrating the approach into adaptive mesh generation, testing other representations for characterizing the singularity, etc.
Moreover, the accurate singularity detection hinges on mesh data from AMR correctly refined towards the singularity in the \emph{pre-asymptotic} regime (at the coarse mesh level). The high quality mesh may not be guaranteed unless a \emph{robust} a posteriori error estimator is used in AMR. As a result, theoretical emphasis will be placed on the development of \emph{robust} a posteriori error estimators in order to generate high-quality training datasets for successful singularity detection.

\bibliography{refs}
\bibliographystyle{plain}
\end{document}